\newtheorem{theorem}{Theorem}[section]
\newtheorem{definition}[theorem]{Definition}
\numberwithin{equation}{section}
\newtheorem{lemma}[theorem]{Lemma}
\newtheorem{proposition}[theorem]{Proposition}
\newtheorem{remark}[theorem]{Remark}
\newtheorem{claim}[theorem]{Claim}
\numberwithin{equation}{section}
\def\Z{\mathbb{Z}}
\def\R{\mathbb{R}}
\def\F{\mathcal{F}}
\def\B{\mathcal{B}}
\renewcommand{\phi}{\varphi}
\renewcommand{\epsilon}{\varepsilon}
\def\RR{\mathcal{R}}
\def\val{\textrm{Val}}
\def\G{\mathcal{G}}
\def\H{\mathcal{H}}
\newcommand{\1}{{\text{\Large $\mathfrak 1$}}}
\newcommand{\var}{\operatorname{var}}
\newcommand{\vol}{\mathrm{vol}}
\newcommand{\til}{\widetilde}
\newcommand{\pr}[1]{\mathbb{P}\!\left(#1\right)}
\newcommand{\E}[1]{\mathbb{E}\!\left[#1\right]}
\newcommand{\prstart}[2]{\mathbb{P}_{#2}\!\left(#1\right)}
\newcommand{\prcond}[3]{\mathbb{P}_{#3}\!\left(#1\;\middle\vert\;#2\right)}
\newcommand{\econd}[2]{\mathbb{E}\!\left[#1\;\middle\vert\;#2\right]}
\newcommand{\tn}{|\kern-.1em|\kern-0.1em|}
\newcommand\be{\begin{equation}}
\newcommand\ee{\end{equation}}
\begin{document}
\title{\bf   Hunter,   Cauchy Rabbit, and Optimal Kakeya Sets}

\author[1]{Yakov Babichenko}
\author[2]{Yuval Peres}
\author[3]{Ron Peretz}
\author[4]{Perla Sousi}
\author[5]{Peter Winkler}
\affil[1]{The Hebrew University of Jerusalem, Israel}
\affil[2]{Microsoft Research, Redmond, WA}
\affil[3]{Tel Aviv University, Israel}
\affil[4]{University of Cambridge, Cambridge, UK}
\affil[5]{Dartmouth College, Hanover, NH}
%
\date{}
\maketitle
\thispagestyle{empty}

\begin{abstract}
A planar set that contains a unit segment in every direction is called a Kakeya set. We relate these sets to a game of pursuit on a cycle $\Z_n$.
A hunter and a rabbit move on the nodes of $\Z_n$ without seeing each other.  At each step, the hunter moves to a neighbouring vertex or stays in place, while the rabbit is free to jump to any node. Adler et al (2003) provide strategies for hunter and rabbit that are optimal up to constant factors and achieve probability of capture in the first $n$ steps of order $1/\log n$. We show these strategies yield a Kakeya set consisting of $4n$ triangles with minimal area, (up to constant), namely $\Theta(1/\log n)$. As far as we know,
this is the first non-iterative construction of a boundary-optimal Kakeya set.
Considering the continuum analog of the game yields a construction of a random Kakeya set from two independent standard Brownian motions $\{B(s): s \ge 0\}$ and $\{W(s): s \ge 0\}$. Let $\tau_t:=\min\{s \ge 0: B(s)=t\}$. Then $X_t=W(\tau_t)$  is a Cauchy process,
and $K:=\{(a,X_t+at) : a,t \in [0,1]\}$ is a Kakeya set of zero area. The area of the $\epsilon$-neighborhood of $K$ is as small as possible, i.e., almost surely of order $\Theta(1/|\log \epsilon|)$. 
\newline
\newline
\emph{Keywords and phrases.} Pursuit games, graph games, Kakeya sets, Cauchy process.
\newline
MSC 2010 \emph{subject classifications.}
Primary 49N75; 
secondary 05C57, 60G50. 
\end{abstract}

\begin{figure}
\begin{center}
\epsfig{file=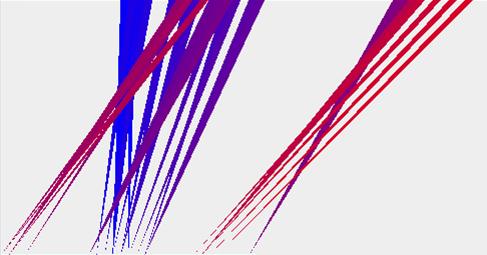, width=9cm}
\caption{\label{fig:sim-triangles} The hunter and rabbit construct a Kakeya set (see Section~\ref{sec:kakeya})
}
\end{center}
\end{figure}

\section{Introduction}

A subset $S$ of $\R^2$ is called a {\bf Kakeya} set if for every point $P$ on the unit circle in $\R^2$ there is a translation of
the line segment $(0,P)$ that is contained in $S$. A deterministic construction of a Kakeya set of zero area was first given by Besicovitch~\cite{Bes1928} in 1928.
Perron~\cite{Perron} in 1928 published a new proof of that theorem. Schoenberg~\cite{Schoenberg} constructed a Kakeya set  consisting of $4n$ triangles of area $\Theta(1/\log n)$; his construction is explained in~\cite{Besicovitch}. A similar construction was given by Keich~\cite{Keich}, who also proved that any Kakeya set consisting of $4n$ triangles cannot have area of smaller order, see~\cite[Theorem~2]{Keich}. 

In the present work we construct a new class of optimal Kakeya sets using optimal strategies in a certain game of pursuit and evasion.


\begin{definition}\rm{

Let $\G_n$ be the following two-player zero-sum game.  At every time step each player occupies a vertex of the cycle $\Z_n$.
At time $0$ the hunter and rabbit choose arbitrary initial positions. At each subsequent  step the hunter may move to an adjacent vertex or stay where she is; simultaneously the rabbit may stay where he is or move to any vertex on the cycle.
Neither player can see the other's position.  The game ends at ``capture time'' when the two players occupy the same vertex at the same time.
The hunter's goal is to minimize expected capture time; the rabbit's goal is to maximize it.
}
\end{definition}

\begin{theorem}{{\bf \cite{Rabbit}}}\label{thm:cauchyrabbit}
There exists a randomized strategy for the rabbit in the game $\G_n$ so that against any strategy for the hunter, the capture time $\tau$ satisfies
\[
\E{\tau} \geq c_1 n \log n,
\]
where $c_1$ is a fixed positive constant.
\end{theorem}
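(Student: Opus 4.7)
\textit{Strategy.} I would take the rabbit's strategy to be a discrete Cauchy walk on $\Z_n$: let $R_0$ be uniform on $\Z_n$, let $(X_s)_{s \geq 1}$ be i.i.d.\ integer-valued with $\mathbb{P}(X=k) \asymp 1/(1+k^2)$, and set $R_t := R_0 + \sum_{s=1}^t X_s \pmod n$. Since $R_0$ is uniform and independent of the increments, $R_t$ has uniform marginal on $\Z_n$ for every $t$; consequently, for any hunter trajectory $H=(H_t)$, writing $N_T := |\{t \leq T : R_t = H_t\}|$, one has $\mathbb{E}[N_T] = (T+1)/n$.

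\textit{Clustering via a local CLT.} The central ingredient is that the Cauchy walk obeys $\mathbb{P}(S_u \equiv k \pmod n) \gtrsim 1/u$ uniformly in $|k| \leq u \leq n$, by the local CLT for the $\alpha=1$ stable law. Conditioning on the first capture time $\tau = t_0$, the process $(R_t - R_{t_0})_{t \geq t_0}$ is an independent Cauchy walk, and the hunter's bounded-speed constraint forces $|H_t - H_{t_0}| \leq t - t_0$. Hence for $t_0 < t \leq t_0 + n$,
\[
\mathbb{P}(R_t = H_t \mid \tau = t_0) \;=\; \mathbb{P}\!\left(\textstyle\sum_{s = t_0 + 1}^{t} X_s \equiv H_t - H_{t_0} \pmod n\right) \;\gtrsim\; \frac{1}{t - t_0}.
\]
Summing, $\mathbb{E}[N_T \mid \tau = t_0] \gtrsim \log \min(T - t_0,\, n)$.

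\textit{Averaging and conclusion.} To bound $\mathbb{P}(\tau \leq T')$, I would take $T = 2T'$ and restrict the first-moment identity to the event $\{\tau \leq T'\}$: since $T - t_0 \geq T'$ whenever $t_0 \leq T'$,
\[
\mathbb{E}[N_T] \;\geq\; \sum_{t_0 \leq T'} \mathbb{P}(\tau = t_0)\, c\log\min(T - t_0,\, n) \;\gtrsim\; \log\min(T',\, n)\,\mathbb{P}(\tau \leq T').
\]
Combining with $\mathbb{E}[N_T] = O(T'/n)$ yields $\mathbb{P}(\tau \leq T') \lesssim T'/(n \log\min(T',n))$. Choosing $T' = c_1 n \log n$ with $c_1$ small makes the relevant logarithm $\asymp \log n$, so the right side is at most $1/2$, and consequently $\mathbb{E}[\tau] \geq c_1 n \log n / 2$.

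\textit{Main obstacle.} The technical heart of the argument is the local CLT estimate $\mathbb{P}(S_u \equiv k \pmod n) \gtrsim 1/u$ for the discrete Cauchy walk: for the continuous Cauchy process it is immediate from the explicit stable density, but on the finite cycle one must track it uniformly from $u = 1$ up through the mixing scale $u \asymp n$, and for all offsets $|k| \leq u$ (which is exactly what the hunter's bounded speed supplies). Everything else reduces to a clean first-moment computation, with the averaging trick of comparing the horizons $T$ and $T/2$ extracting the crucial $\log n$ factor essentially for free.
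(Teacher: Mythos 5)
Your proposal is correct in outline and follows the same probabilistic skeleton as the paper: a Cauchy-type rabbit walk with uniform marginals, the first-moment identity $\E{N_T}=(T+1)/n$, a conditional ``clustering'' lower bound $\E{N_T\mid \tau=t_0}\gtrsim \log n$ driven by a pointwise estimate $\mathbb{P}(S_u=k)\gtrsim 1/u$ for $|k|\le u$, and the ratio of the two to bound the collision probability. The differences are in the two places you flag. First, the key local estimate: you invoke the local CLT for lattice variables in the domain of attraction of the $\alpha=1$ stable law, which is citable (Gnedenko) but leaves the uniformity over all scales $1\le u\le n$ to be patched (the asymptotic LLT says nothing for small $u$, where a separate easy argument is needed). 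The paper instead \emph{constructs} the Cauchy increments as the horizontal displacements $X_{T_{i}}$ of a planar simple random walk at successive hitting times of horizontal lines, and proves $\mathbb{P}_0(X_{T_i}=k)\ge 1/(96i)$ for $|k|\le i$ by elementary symmetry: the mode of $X_{T_i}$ is at $0$, the walk exits the square $[-i,i]^2$ through the top with probability $1/4$, and the strong Markov property at the exit of the square $[-|k|,|k|]^2$ transfers the bound at $0$ to general $k$. This is what makes the paper's proof self-contained and is the main content of its Section~3. Second, the packaging: you run a single moment computation over a horizon $T=2T'\asymp n\log n$, whereas the paper proves $\pr{\tau<n}\le c/\log n$ for the $n$-step game and then converts this to $\E{\tau}\ge n\log n/c$ via the independent-rounds lemma (capture time stochastically dominates $n$ times a geometric); both routes work, but the paper's factorization through the finite game $\G'_n$ is also what it reuses for the Kakeya construction.
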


The rabbit's strategy is based on a discretized Cauchy walk; in Section~\ref{sec:lowerbound} we give a new proof of this theorem that relies on symmetry properties of simple random walk in two dimensions.

The bound $n\log n$ given in Theorem~\ref{thm:cauchyrabbit} is sharp, in the following sense:

\begin{theorem}{\bf \cite{Rabbit}}\label{thm:upperbound}
There exists a randomized strategy for the hunter in the game $\G_n$ so that against any strategy for the rabbit, the capture time $\tau$ satisfies
\[
\E{\tau} \leq c_2n\log n,
\]
where $c_2$ is a positive constant.
\end{theorem}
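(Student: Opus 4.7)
My plan is to construct a randomized hunter strategy that captures the rabbit within $T=O(n\log n)$ steps with probability bounded below by a positive absolute constant; the bound $\E{\tau}\leq c_2 n\log n$ then follows by a standard geometric restart argument. By Yao's minimax principle, it suffices to fix an arbitrary deterministic rabbit trajectory $(R_t)_{t\geq 0}$ and design the hunter's randomized strategy against it.

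The natural candidate I would use is a strategy built from $K=\lceil\log_2 n\rceil$ consecutive \emph{sweep phases} of length $n$. In phase $k\in\{0,\ldots,K-1\}$, the hunter samples an independent direction $\sigma_k\in\{-1,+1\}$ and offset $b_k\in\Z_n$ uniformly, and performs a deterministic sweep at effective drift velocity $\sigma_k 2^{-k}$: advancing one step in direction $\sigma_k$ every $2^k$ time units, starting at position $b_k$. With $u=s-kn$, the hunter's position is $b_k+\sigma_k\lfloor u/2^k\rfloor\pmod n$, so capture in phase $k$ is the event $b_k\in\Delta_k(\sigma_k):=\{R_s-\sigma_k\lfloor u/2^k\rfloor\bmod n: kn\leq s<(k+1)n\}$. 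Since the $b_k$'s are independent uniforms and phases do not interact,
\[
\Pr(\text{no capture by time }Kn)\;=\;\prod_{k=0}^{K-1}\Bigl(1-\tfrac{\mathbb{E}_{\sigma_k}|\Delta_k|}{n}\Bigr)\;\leq\;\exp\!\Bigl(-\tfrac{1}{n}\sum_{k=0}^{K-1}\mathbb{E}_{\sigma_k}|\Delta_k|\Bigr).
\]
Hence the theorem reduces to a \emph{covering lemma}: for every deterministic rabbit trajectory, $\sum_k \mathbb{E}_{\sigma_k}|\Delta_k(\sigma_k)|\geq cn$ for an absolute constant $c>0$. Heuristically, the $K$ dyadic drifts probe every scale of motion, and symmetrizing over $\sigma_k\in\{\pm1\}$ prevents the rabbit from exploiting a single sign.

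The main obstacle is the covering lemma itself. A direct count shows $|\Delta_k^+|\cdot|\Delta_k^-|\geq n/2^k$ (since the map $(x,y)\mapsto(x-y,x+y)$ is injective for odd $n$ and the hunter's drift takes $n/2^k$ distinct values), and AM--GM gives only $\mathbb{E}_{\sigma_k}|\Delta_k|\geq\sqrt{n/2^k}$; summing yields $O(\sqrt n)$, which is too weak, and an explicit Sidon-type construction for $(\Delta^+,\Delta^-)$ shows the rabbit can in fact saturate this weak bound at each scale. To recover $\Omega(n)$ one must either refine the strategy—for example by drawing drift velocities from a richer family than the dyadic one or by refreshing the offset more often than once per phase, breaking the Sidon coincidences—or replace the scale-by-scale analysis with a cross-scale compatibility argument showing that a rabbit that forces $|\Delta_k|$ small at one scale is committed to a trajectory that makes $|\Delta_{k'}|$ large for many $k'\neq k$. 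This latter route is the dual to the Cauchy-walk lower bound of Theorem~\ref{thm:cauchyrabbit}, whose scale invariance is precisely the extremal configuration, and I would expect to formalize it via a dyadic decomposition of the increments $R_{s+1}-R_s$ combined with a counting inequality on the spacetime set $\{(s,R_s):s\leq Kn\}$. Once the covering lemma is established, the capture probability in $O(n\log n)$ steps is $\Omega(1)$ and the theorem follows.
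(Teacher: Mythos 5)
Your proposal has a genuine gap, and you have flagged it yourself: everything rests on the covering lemma $\sum_k \mathbb{E}_{\sigma_k}|\Delta_k(\sigma_k)| \geq cn$, which you do not prove, and your own AM--GM/Sidon analysis shows that the natural scale-by-scale bound caps out at $O(\sqrt{n})$. Whether cross-scale constraints force a trajectory that is ``thin'' at one dyadic velocity to be ``thick'' at many others is precisely the hard part; without it, the dyadic-sweep strategy is not known to work, and it is even conceivable that a single rabbit trajectory defeats all $\log n$ dyadic velocities simultaneously. As written, this is a reduction to an open sublemma rather than a proof.

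The paper (Proposition~\ref{pro:upperbound} together with Lemma~\ref{lem:strategy}) avoids this difficulty by changing both the strategy and the target. Instead of $\log n$ phases with dyadic velocities aiming for capture probability $\Omega(1)$ over $n\log n$ steps, the hunter runs a single phase of length $n$ with $H_\ell = \lceil an + b\ell\rceil \bmod n$, where the velocity $b$ is uniform on the \emph{continuum} $[0,1]$ rather than drawn from a discrete dyadic family, and aims only for capture probability $\Theta(1/\log n)$ in those $n$ steps; Lemma~\ref{lem:strategy} then converts this into $\E{\tau}\leq c_2 n\log n$ by independent restarts. The point is that the second moment method replaces any covering lemma: against a fixed rabbit trajectory, $\E{K_n}=\sum_\ell \pr{I_\ell}=1$ exactly because each $H_\ell$ is uniform, and the pair correlations satisfy $\pr{I_\ell\cap I_{\ell+j}}\leq c/(jn)$, since the two collision events confine $(a,b)$ to two strips of width $\sim 1/n$ whose slopes differ by $j/n$, so their intersection has area $O(1/(jn))$. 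Summing gives $\E{K_n^2}\leq c'\log n$, hence $\pr{K_n>0}\geq (\E{K_n})^2/\E{K_n^2}\geq c/\log n$ uniformly over rabbit trajectories, with no trajectory-dependent coverage estimate needed. If you want to keep your phase-based architecture, the closest repair is to make each phase a sweep with velocity uniform on $[0,1]$ and run the second-moment computation within a single phase --- which is exactly the paper's argument.
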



In Section~\ref{sec:upperbound} we give a self-contained proof of this theorem that will be useful in making the connection to Kakeya sets. 
Combining the randomized strategies of the hunter and the rabbit of Theorems~\ref{thm:cauchyrabbit} and~\ref{thm:upperbound} we prove the following theorem in Section~\ref{sec:kakeya}.

\begin{theorem}\label{thm:Kakeya}
For all $n$ there exists a Kakeya set of area at most of order $1/\log n$, which is the union of $4n$ triangles.
\end{theorem}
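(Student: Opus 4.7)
Following the continuum blueprint $K=\{(a,X_t+at):a,t\in[0,1]\}$ sketched in the abstract, I would construct the discrete Kakeya set using the rabbit's Cauchy walk from Theorem~\ref{thm:cauchyrabbit} to place the apices of $n$ thin triangles per quadrant, and bound its area via a coverage estimate whose $\log n$ factor matches (and can be proved using) the hunter's strategy of Theorem~\ref{thm:upperbound}.

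\emph{Construction and area reduction.} Fix a realization $R_0,\ldots,R_{n-1}$ of the rabbit's walk on $\Z_n$. For each $k\in\{0,\ldots,n-1\}$ let $T_k$ be the triangle with apex $(0,R_k/n)$ and base from $(1,R_k/n+k/n)$ to $(1,R_k/n+(k+1)/n)$; each $T_k$ has area $1/(2n)$ and contains a unit-length segment of every slope in $[k/n,(k+1)/n]$. Three rotated/reflected copies tile the remaining three angular quadrants, giving $4n$ triangles whose union $K$ is a Kakeya set with $\sum_k|T_k|=2$. Writing $N(z):=\#\{k:z\in T_k\}$, the identity $\int N(z)\,dz=2$ yields $|K|=2/\overline{N}$, where $\overline{N}$ is the average of $N$ over $K$, so it suffices to show $\overline{N}=\Omega(\log n)$.

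\emph{Coverage estimate and main obstacle.} A direct check shows $z=(x,y)\in T_k$ iff $R_k\in[ny-x(k+1),\,ny-xk]$, an interval of length $x$. Since the Cauchy walk at time $k$ has density of order $\min(1/k,1/n)$ near any point in its typical range, summation over $k$ yields $\E{N(z)}\asymp x\log n$ for $z$ in the bulk of $K$. The chief difficulty is upgrading this mean estimate to a lower bound on $\overline{N}$ itself: the heavy tails and long-range correlations of the Cauchy walk make a direct second-moment/Paley--Zygmund argument subtle, particularly near the boundary where $x$ is small. My plan is to invoke the hunter strategy of Theorem~\ref{thm:upperbound}: reinterpreting each $z=(x,y)$ as a space-time event and each slope through $z$ as a constant-velocity rabbit, the hunter's $O(n\log n)$ capture-time guarantee implies deterministically that $\Omega(\log n)$ of the $T_k$ cover each $z\in K$. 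This gives $\overline{N}=\Omega(\log n)$, and hence $|K|=O(1/\log n)$, completing the proof.
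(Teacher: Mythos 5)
Your construction and the reduction \(|K|=2/\overline{N}\) are essentially the paper's argument seen from the dual side: in the paper the \(4n\) triangles live in the hunter's parameter square (the triangle attached to time \(m\) is the set of \((a,b)\) for which the linear hunter \(f_{a,b}(t)=(an+bt)\bmod n\) meets the rabbit during \([m,m+1)\)), the multiplicity \(N\) is exactly the collision count \(K_n\), and your desired bound \(\overline{N}=\Omega(\log n)\) is exactly the estimate \(\econd{K_{2n}}{K_n>0}\geq c\log n\) from Proposition~\ref{pro:probupperbound}. So the skeleton is right. The step where you actually prove \(\overline{N}=\Omega(\log n)\), however, is wrong, and in two distinct ways. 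First, you invoke the wrong half of the game. Theorem~\ref{thm:upperbound} (the hunter's guarantee) says that against \emph{any} rabbit path the linear hunter collides with probability at least \(c'/\log n\); since \(\E{N(a,b)}=\sum_m\vol(T(m,z_m))=1\) identically, this yields \(\econd{N}{N>0}\leq \log n/c'\), i.e.\ a \emph{lower} bound \(\vol(K)\geq c'/\log n\) --- the Keich-type optimality statement, not the area upper bound you need. The upper bound on the area must come from the \emph{rabbit's} evasion guarantee (Theorem~\ref{thm:cauchyrabbit} / Proposition~\ref{pro:probupperbound}): small collision probability against every admissible hunter is literally the statement that the union of triangles has small area.

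Second, the claimed ``deterministic'' conclusion that \(\Omega(\log n)\) triangles cover \emph{every} \(z\in K\) is false and could not follow from an expected-capture-time statement in any case: any union of \(4n\) triangles has points of multiplicity \(O(1)\) (near the apices, where your covering interval for \(R_k\) has length \(x\to 0\), and at extreme points of the union), and an expectation bound over the hunter's randomness says nothing pointwise about a fixed realization of the rabbit. The correct way past your ``chief difficulty'' is to randomize over the rabbit and apply Fubini: \(\E{\vol(K)}=\int\pr{N(z)>0}\,dz\), and for each fixed \(z\) bound \(\pr{N(z)>0}\leq \E{N'(z)}/\econd{N'(z)}{N(z)>0}\), where the denominator is bounded below by \(c\log n\) using the Markov property of the Cauchy walk (after the first index \(k\) with \(z\in T_k\) the increments are a fresh copy) together with the anti-concentration estimate of Lemma~\ref{lem:randomwalk}; this is precisely the proof of Proposition~\ref{pro:probupperbound}, with the uniform random starting point \(U\) (which your construction omits) needed to make the first moment \(\E{N'(z)}\) equal to \(2\). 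One also has to justify restricting to linear hunters: the rabbit's guarantee is stated for paths in \(\H\), so one needs the small discretization argument of Proposition~\ref{pro:kakeya} showing every \(1\)-Lipschitz path is dominated by an admissible hunter path. With those repairs your outline becomes the paper's proof; as written, the decisive inequality is unsupported.
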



A central open problem regarding Kakeya sets is to understand their
Hausdorff dimension and Minkowski dimension. Davis~\cite{Davies}
proved that every Kakeya set in $\R^2$ has full Hausdorff
and Minkowski dimensions, i.e., dimension~$2$. In dimension $d>2$, it
is an open question whether every Kakeya set has full Hausdorff or
Minkowski dimension.

Minkowski dimension of a set $K \subset \R^n$ is closely
related to the area of its $\epsilon$-neighbourhood, denoted by
$K(\epsilon)$. Therefore it is natural to examine the sets
$K(\epsilon)$. The first  question that arises is the
following. What is the minimal area of $K(\epsilon)$? The answer
is known to be $\Theta(1/|\log \epsilon|)$.

\begin{proposition}\label{pro:min-kakeya}
For every Kakeya set $K \subset \R^2$ and every sufficiently
small $\epsilon >0$ we have $\vol(K(\epsilon))\geq 1/(3|\log
\epsilon|)$.
\end{proposition}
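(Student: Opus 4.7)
The plan is the standard second-moment (Cauchy--Schwarz) bound applied to $\epsilon$-tubes around unit segments lying inside $K$. Fix a large integer $N$ (to be chosen), set $\theta_i=(i-1)\pi/N$ for $i=1,\ldots,N$, and, using the Kakeya property, pick a unit segment $L_i\subset K$ in direction $\theta_i$. Let $T_i:=L_i(\epsilon)\subset K(\epsilon)$. Each $T_i$ contains a $1\times 2\epsilon$ rectangle, so $\vol(T_i)\ge 2\epsilon$ and $\sum_i \vol(T_i)\ge 2\epsilon N$.

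The key geometric input is the intersection bound: two infinite strips of width $2\epsilon$ whose axes cross at angle $\alpha\in(0,\pi/2]$ meet in a parallelogram of area $4\epsilon^{2}/\sin\alpha$. Hence, writing $\|\theta\|_\pi$ for distance modulo $\pi$,
\[
\vol(T_i\cap T_j)\le \frac{4\epsilon^{2}}{\sin\|\theta_i-\theta_j\|_\pi}\qquad (i\ne j).
\]
Summing over $j\ne i$ with $\|\theta_i-\theta_j\|_\pi=\min(|i-j|,N-|i-j|)\pi/N$, and using $\sin x\ge 2x/\pi$ on $[0,\pi/2]$, gives $\sum_{j\ne i}\vol(T_i\cap T_j)\le C\epsilon^{2}N\log N$ for an explicit $C$, and therefore
\[
\sum_{i,j}\vol(T_i\cap T_j)\le 2\epsilon N\,(1+o(1))+C\epsilon^{2}N^{2}\log N.
\]
Applying Cauchy--Schwarz to $f=\sum_i \mathbf{1}_{T_i}$ yields
\[
\vol(K(\epsilon))\;\ge\;\vol\!\Bigl(\bigcup_i T_i\Bigr)\;\ge\;\frac{\bigl(\int f\bigr)^{2}}{\int f^{2}}\;=\;\frac{\bigl(\sum_i\vol(T_i)\bigr)^{2}}{\sum_{i,j}\vol(T_i\cap T_j)}\;\ge\;\frac{4\epsilon^{2}N^{2}}{2\epsilon N+C\epsilon^{2}N^{2}\log N}.
\]
Choosing $N\asymp 1/\bigl(\epsilon|\log\epsilon|\bigr)$ balances the two terms in the denominator and produces a lower bound of the claimed order $1/|\log\epsilon|$.

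The main obstacle is pushing the constant all the way down to $1/3$ rather than just some unspecified $c>0$. This requires careful bookkeeping of (i) the exact area of a single tube (for a unit segment it is $2\epsilon+\pi\epsilon^{2}$, and only the main term $2\epsilon$ should be used in the numerator), (ii) the constant in the harmonic sum $\sum_{k=1}^{N/2}1/k\le \log N+1$ that enters the bound on $\sum_{j\ne i}\vol(T_i\cap T_j)$, and (iii) the choice of $N$ as an explicit integer close to the optimum. The order $\Theta(1/|\log\epsilon|)$ is automatic from the balance above; tightening to $1/3$ is a routine but careful optimization of the elementary estimates.
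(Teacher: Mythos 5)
Your proposal is correct and is essentially the paper's own argument (carried out in Section~\ref{sec:Cauchyproc} as Proposition~\ref{pro:besicvol}): the same $\epsilon$-tubes around unit segments in equally spaced directions, the same strip-intersection bound $4\epsilon^2/\sin\alpha$, and the same Cauchy--Schwarz second-moment step. The only cosmetic difference is your parameter choice $N\asymp 1/(\epsilon|\log\epsilon|)$; the paper simply takes $n=\lfloor 1/\epsilon\rfloor$, which makes the ``diagonal'' term in the denominator $O(1)$ and delivers the explicit constant (in fact $1/(2\log(1/\epsilon))$) without any balancing, so your deferred bookkeeping does go through.
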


This proposition was proved by Keich~\cite{Keich} using a maximal inequality
from Bourgain's paper~\cite{Bourgain}. In Section~\ref{sec:Cauchyproc} we give an alternative
elementary proof, which can also be found in Ben Green's lecture
notes~\cite{BenGreen}.

Proposition~\ref{pro:min-kakeya} motivates the following definition.
A Kakeya set $K$ will be called \textit{optimal} if it
holds that $\vol(K(\epsilon))=O(1/|\log \epsilon|)$ as $\epsilon \to 0$. Note that every
optimal Kakeya set must have zero area.

Construction of optimal Kakeya sets is known in the literature, see
Keich~\cite{Keich}, but the construction is quite involved. Next, we describe  
a continuum analog of the Kakeya construction of Theorem~\ref{thm:Kakeya}. This simple probabilistic construction  almost surely yields an
optimal Kakeya set.

Let $\{B(s): s \ge 0\}$ and $\{W(s): s \ge 0\}$ be two independent standard Brownian motions, and let $\tau_t:=\min\{s \ge 0: B(s)=t\}$. Then $X_t=W(\tau_t)$  is a Cauchy process, i.e., a L\'evy process where the increments $X_{s+t}-X_s$ have the same law as $tX_1$, and $X_1$ has the Cauchy density ${\pi (1+x^2)}^{-1}$. See, e.g.,~\cite{Bertoin} or~\cite[Theorem~2.37]{BM}.
 
\begin{theorem}\label{thm:Cauchyproc} 
Let $\{X_t: t\geq 0\}$ be a Cauchy process and let
$\Lambda:=\{(a,X_t+at):a,t \in [0,1]\}$. Then the union
$\cup_{k=0}^{3} e^{i\pi k/4}\Lambda$ of four rotated copies of
$\Lambda$ is almost surely an optimal Kakeya set, i.e.\ there exist positive constants $c_1,c_2$ such that as $\epsilon \to 0$ we have
\[
\frac{c_1}{|\log \epsilon| } \leq \vol(\Lambda(\epsilon)) \leq \frac{c_2}{|\log \epsilon|} \ \text{ a.s.}
\]
\end{theorem}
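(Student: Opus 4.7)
\emph{Overall plan.} I would split the proof into the lower bound (routine, via Proposition~\ref{pro:min-kakeya}), a slicing reduction to the range of a Cauchy-with-drift process, a capacity estimate, and finally the promotion from expected to almost-sure behaviour, which is where the main difficulty lies.

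\emph{Kakeya property and lower bound.} For each fixed $s \in [0,1]$, the set $L_s := \{(a, X_s + as) : a \in [0,1]\}$ is a line segment of slope $s$ and length $\sqrt{1+s^2} \ge 1$, so $\Lambda$ contains a unit segment with every slope in $[0,1]$. The four rotations by multiples of $\pi/4$ together cover all directions, making $K := \bigcup_{k=0}^{3} e^{i\pi k/4}\Lambda$ almost surely a Kakeya set. Proposition~\ref{pro:min-kakeya} applied to $K$, together with isometry-invariance of volume and subadditivity over the four rotated copies, yields $\vol(\Lambda(\epsilon)) \ge \tfrac{1}{4}\vol(K(\epsilon)) \ge 1/(12|\log\epsilon|)$ for sufficiently small $\epsilon$, giving the lower bound with $c_1 = 1/12$.

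\emph{Slicing and capacity estimate.} For the upper bound introduce $Y^{(b)}_t := X_t + bt$, a Cauchy process with drift $b$, and its range $R_b := Y^{(b)}([0,1])$. A direct check shows that if $(b, y) \in N_\epsilon(\Lambda)$ then there exist $(a, t) \in [0,1]^2$ with $|a-b| \le \epsilon$ and $|y - X_t - at| \le \epsilon$, so $|y - Y^{(b)}_t| \le \epsilon + t|a-b| \le 2\epsilon$. Fubini then yields the pathwise bound
\[
\vol(\Lambda(\epsilon)) \le \int_{-\epsilon}^{1+\epsilon} \mathrm{Leb}(N_{2\epsilon}(R_b))\,db.
\]
Via the representation $X_t = W(\tau_t)$, the Cauchy process is the trace of planar Brownian motion, and a classical logarithmic-capacity computation yields, uniformly in $b \in [0,1]$ and for $y$ in a bounded window, $\bP(R_b \cap [y-\epsilon, y+\epsilon] \neq \emptyset) = O(1/|\log\epsilon|)$; combined with the Cauchy tail bound $\bP(\sup_{t \le 1}|X_t| > M) = O(1/M)$, this gives $\E{\vol(\Lambda(\epsilon))\,\mathbf{1}_{A_M}} = O(C_M/|\log\epsilon|)$ on the event $A_M := \{\sup_{t \le 1}|X_t| \le M\}$.

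\emph{Main obstacle: almost-sure upgrade.} The principal difficulty is passing from this expectation estimate to an almost-sure one. Two features conspire against direct arguments: the Cauchy heavy tails make the unrestricted expectation $\E{\vol(\Lambda(\epsilon))}$ infinite (since $\sup_t |X_t|$ has no first moment), and plain Markov/Chebyshev applied along a geometric subsequence $\epsilon_k = e^{-k}$ is too lossy to produce the sharp decay rate $1/k$. I would first work inside $A_M$ and combine a Borel--Cantelli argument on $(\epsilon_k)$ with the monotonicity of $\epsilon \mapsto \vol(\Lambda(\epsilon))$ to try to obtain $\vol(\Lambda(\epsilon)) = O_M(1/|\log\epsilon|)$ almost surely on $A_M$, then let $M \to \infty$; this likely needs a second-moment computation exploiting approximate decorrelation of $R_b$ and $R_{b'}$ for distant $b, b'$. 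A more robust alternative, directly mirroring the discrete construction of Theorem~\ref{thm:Kakeya}, is to discretise time at scale $\epsilon$: by Cauchy self-similarity, $(n X_{k/n})_{k=0}^n$ with $n = \lceil 1/\epsilon \rceil$ is a discrete Cauchy random walk, so the polygonal approximation to $\Lambda$ is, after rescaling, a configuration of the form covered by Theorem~\ref{thm:Kakeya} of total area $O(1/\log n) = O(1/|\log\epsilon|)$, and one controls the intra-interval oscillations of $X$ using $\bP(\sup_{s \le \epsilon}|X_s| > L\epsilon) = O(1/L)$ together with Borel--Cantelli.
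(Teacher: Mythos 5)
Your reduction to slices $\{a\}\times V_a([0,1],2\epsilon)$ and your first-moment/capacity estimate are essentially the paper's first two steps (the displayed inclusion in the proof of Theorem~\ref{thm:Cauchyproc} and Lemma~\ref{cl:voltr}), and you correctly locate the main difficulty in the almost-sure upgrade. But there are two genuine problems. First, your premise that $\E{\vol(\Lambda(\epsilon))}$ is infinite is false, and the truncation to $A_M$ it motivates is both unnecessary and harmful. The probability of \emph{hitting} a ball $\B(x,r)$ at distance $|x|$ is of order $r/(x^2\log(t/r))$, far smaller than the tail $\pr{\sup_{t\le1}|X_t|>|x|}\asymp 1/|x|$, because the Cauchy process overshoots when it crosses a distant level; Lemma~\ref{cl:voltr} exploits exactly this by integrating the occupation-time numerator over \emph{all} of $\R$ (it equals $4rt$ since the transition density integrates to one) against a denominator $\econd{\til Z_x}{Z_x>0}\ge c r\log(t/r)$ that is uniform in $x$. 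Your ``bounded window'' version produces a constant $C_M$ growing linearly in $M$, which cannot be removed by letting $M\to\infty$.

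Second, and more seriously, neither of your two routes to the almost-sure statement works as described, and the idea that actually closes the gap is absent. The paper gets independence from the \emph{time} variable, not the slope variable: for $\epsilon=4^{-n}$ it cuts $[0,1]$ into $2^n$ intervals $I_i$ of the intermediate length $\sqrt\epsilon$ and sets $Y_i=\int\vol(V_a(I_i,2\epsilon))\,da$; these are i.i.d.\ by stationarity and independence of increments, Lemma~\ref{cl:voltr} with $t=\sqrt\epsilon$, $r=\epsilon$ gives $\E{Y_i}\lesssim\sqrt\epsilon/\log(1/\epsilon)$ (note $\log(t/r)=\fract{1}{2}\log(1/\epsilon)$ at this scale), and the strong Markov property yields $\E{(\vol V_a(I,\delta))^2}\le 2\E{\vol V_a(I,\delta)}^2$ (Lemma~\ref{lem:yestimate}), whence $\var\bigl(\sum_i Y_i\bigr)\lesssim 2^{-n}(\log 4^n)^{-2}$ and Chebyshev plus Borel--Cantelli finish. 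Your first route cannot substitute for this: the ranges $R_b$ and $R_{b'}$ are functions of the \emph{same} Cauchy path, differing only by a deterministic tilt, so there is no decorrelation in $b$ to exploit. Your second route fails quantitatively: with $n=\lceil 1/\epsilon\rceil$ time intervals, $\pr{\sup_{s\le\epsilon}|X_s|>L\epsilon}\asymp 1/L$ means the expected number of intervals with oscillation exceeding $L\epsilon$ is of order $1/(L\epsilon)$, so no choice of $L=O(1)$ controls all intervals and Borel--Cantelli does not apply; the large jumps must be absorbed by the occupation-time argument rather than excluded. (Also, Theorem~\ref{thm:Kakeya} only bounds an expected area, so invoking it would reintroduce the very concentration problem you are trying to solve.)
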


This theorem is proved in Section~\ref{sec:Cauchyproc}.

%
%
%
%
%

\section{Probability of collision}
\label{sec:collision}

In this section we define a win-lose variant of $\G_n$ called $\G'_n$, in which only $n$ moves are made
and the hunter wins if she captures the rabbit.
\newline
Let
\[
\H = \{(H_t)_{t=0}^{n-1}: H_t \in \Z_n, |H_{t+1} - H_t | \leq 1 \} \ \text{ and } \ \RR = \{(R_t)_{t=0}^{n-1}: R_t \in \Z_n \}.
\]
Then the sets of mixed strategies $\Delta_h$ for the hunter and $\Delta_r$ for the rabbit are given by
\[
\Delta_h = \left\{x \in \R^{|\H|}: \forall f \in \H, \ x_{f} \geq 0, \  \sum_{f \in \H} x_{f} =1  \right\} \text{ and }
\Delta_r = \left\{y \in \R^{|\RR|}: \forall g \in \RR, \ y_g \geq 0, \ \sum_{g \in \RR} y_g =1 \right\}.
\]
The hunter wins $\G'_n$ if she captures the rabbit.
The pay off matrix $M = (m_{fg})_{f,g}$, where $f \in \H$ and $g \in \RR$, is given by
\[
m_{fg} = \1(\exists \ \ell \leq n-1: \ f(\ell) = g(\ell) ).
\]
When the hunter and the rabbit use the mixed strategies $x$ and $y$ respectively, then
\[
x^T M y = \prstart{\tau <n}{xy},
\]
where $\tau$ is the capture time.

By the minimax theorem we have
\[
\max_{x \in \Delta_h} \min_{y \in \Delta_r} x^T M y = \min_{y \in \Delta_r} \max_{x \in \Delta_h} x^T M y = \val(\G'_n),
\]
where $\val(\G'_n)$ stands for the value of the game (to the hunter). Thus, there exists a randomized strategy for the hunter
so that against every strategy of the rabbit the probability that they collide in the first $n$ steps is $\val(\G'_n)$;
and there exists a randomized strategy for the rabbit, so that against every strategy of the hunter, the probability they collide
is $\val(\G'_n)$.

\begin{remark}\rm{
In Sections~\ref{sec:lowerbound} and~\ref{sec:upperbound} we give randomized strategies for the hunter and the rabbit that
achieve $\val(\G'_n)$ up to multiplicative constants. In particular, in Propositions~\ref{pro:probupperbound} and~\ref{pro:upperbound} we show there are positive constants $c_3$ and $c_4$ such that
\begin{align}\label{eq:value}
\frac{c_3}{\log n} \leq \val(\G'_n)\leq \frac{c_4}{\log n}.
\end{align}
}
\end{remark}

\begin{lemma}\label{lem:strategy}

Let $\til{x} \in \Delta_h$ be a randomized hunter strategy in the game $\G'_n$
satisfying $\min_{y \in \Delta_r} \til{x}^TMy = p_n$. Then there exists a randomized hunter strategy in the game $\G_n$
so that against any rabbit strategy, the capture time $\tau$ satisfies
\[
\E{\tau} \leq \frac{2n}{p_n}.
\]
Let $\til{y}\in \Delta_r$ be a randomized rabbit strategy in the game $\G'_n$ satisfying $\max_{x \in \Delta_h} x^TM\til{y} = q_n$. Then
there exists a randomized rabbit strategy in the game $\G_n$ so that against any hunter strategy, the capture time $\tau$ satisfies
\[
\frac{n}{q_n} \leq \E{\tau}.
\]
\end{lemma}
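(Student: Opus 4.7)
The plan is to prove both bounds by blocking arguments that concatenate independent copies of the given $\G'_n$-strategies on successive blocks. The only asymmetry is that the rabbit can teleport between blocks while the hunter must walk one vertex at a time, so the hunter needs buffer time to reposition; this accounts for the factor of $2$ in the hunter's bound.

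\emph{Hunter bound.} Divide time into disjoint blocks $B_k=\{(k-1)\cdot 2n,\dots,k\cdot 2n-1\}$ of length $2n$. At the start of $B_k$, independently sample a hunter path $(h_0^{(k)},\dots,h_{n-1}^{(k)})\in\H$ from $\til x$. Use the first $n$ steps of $B_k$ to walk deterministically from the hunter's current location to $h_0^{(k)}$; this is feasible because $\Z_n$ has diameter $\lfloor n/2\rfloor<n$, so she arrives in time and can then wait in place. Over the last $n$ steps of $B_k$, execute the sampled path. Because the rabbit cannot observe the hunter, conditionally on the past the rabbit's trajectory over these last $n$ steps constitutes some element of $\Delta_r$, so by the assumption on $\til x$ the conditional probability of capture during $B_k$ is at least $p_n$, independently across blocks. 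The number of blocks until capture is therefore stochastically dominated by a geometric random variable with mean $1/p_n$, yielding $\E{\tau}\le 2n/p_n$.

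\emph{Rabbit bound.} Divide time into blocks of length $n$ and have the rabbit play an independent copy of $\til y$ on each block; no buffering is needed since the rabbit's motion is unconstrained. By the hypothesis on $\til y$, for every hunter strategy the conditional probability of capture in each block is at most $q_n$, so $\pr{\tau\ge kn}\ge (1-q_n)^k$ for every $k\ge 0$. Summing the tail,
\[
\E{\tau}=\sum_{t\ge 0}\pr{\tau>t}\ \ge\ \sum_{k\ge 1} n\cdot \pr{\tau\ge kn}\ \ge\ n\sum_{k\ge 1}(1-q_n)^k=\frac{n(1-q_n)}{q_n},
\]
which is the stated bound $n/q_n$ up to a lower-order term that is absorbed in the applications (Propositions~\ref{pro:probupperbound} and~\ref{pro:upperbound}, where $q_n=O(1/\log n)$).

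The main delicate point is justifying the independence of capture events across blocks. It holds because neither player observes the other, so the hunter's moves during block $k$ are independent of the rabbit's fresh $\til y$-sample on that block, and symmetrically the rabbit's moves during block $k$ are independent of the hunter's fresh $\til x$-sample. The only technical subtlety is the hunter's transition between blocks under her step-size-one constraint, which the $n$-step buffer resolves at the cost of doubling the block length.
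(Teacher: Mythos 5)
Your proof follows the paper's argument exactly: blocks of length $2n$ for the hunter (with an $n$-step repositioning buffer) and of length $n$ for the rabbit, with a fresh independent sample of $\til{x}$, resp.\ $\til{y}$, in each block and a geometric bound on the number of blocks until capture. The $(1-q_n)$ factor you flag is implicitly present in the paper's version as well (capture in block $k$ only forces $\tau\ge (k-1)n$, not $\tau\ge kn$, so the claimed stochastic domination is off by one block), and as you note it is harmless for the applications, where only the order $n/q_n$ matters.
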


\begin{proof}[{\bf Proof}]

We divide time into rounds of length $n$. In rounds $1,3,5,\ldots$ the hunter employs independent copies of the randomized strategy
$\til{x}$ and she uses the even rounds to move to the proper starting points.

This way we get a new hunter strategy $\xi$ so that against any rabbit strategy $\eta$ in $\G_n$
\[
\prstart{\tau<2n} {\xi \eta}\geq \prstart{\tau<n}{\til{x}\eta'} = \til{x}^TMy\geq p_n,
\]
where $\eta'$ is the restriction of the strategy $\eta$ in the first $n$ steps.
Therefore, by bounding the capture time $\tau$ by $2n$ times a geometric random variable of success probability $p_n$, we get
$\E{\tau} \leq \frac{2n}{p_n}$.

For the lower bound we look at the process in rounds of length $n$. In each round the rabbit employs an independent copy
of the randomized strategy $\til{y}$. Thus the capture time stochastically dominates $n$ times a geometric random variable of
parameter $q_n$, and hence $\E{\tau} \geq \frac{n}{q_n}$.
\end{proof}

\section{The rabbit's strategy}\label{sec:lowerbound}

In this section we give the proof of Theorem~\ref{thm:cauchyrabbit}. We start with a standard
result for random walks in $2$ dimensions and include its proof for the sake of completeness.

%

\begin{figure}
\begin{center}
\subfigure[Escaping the $2i\times 2i$ square]{
\epsfig{file=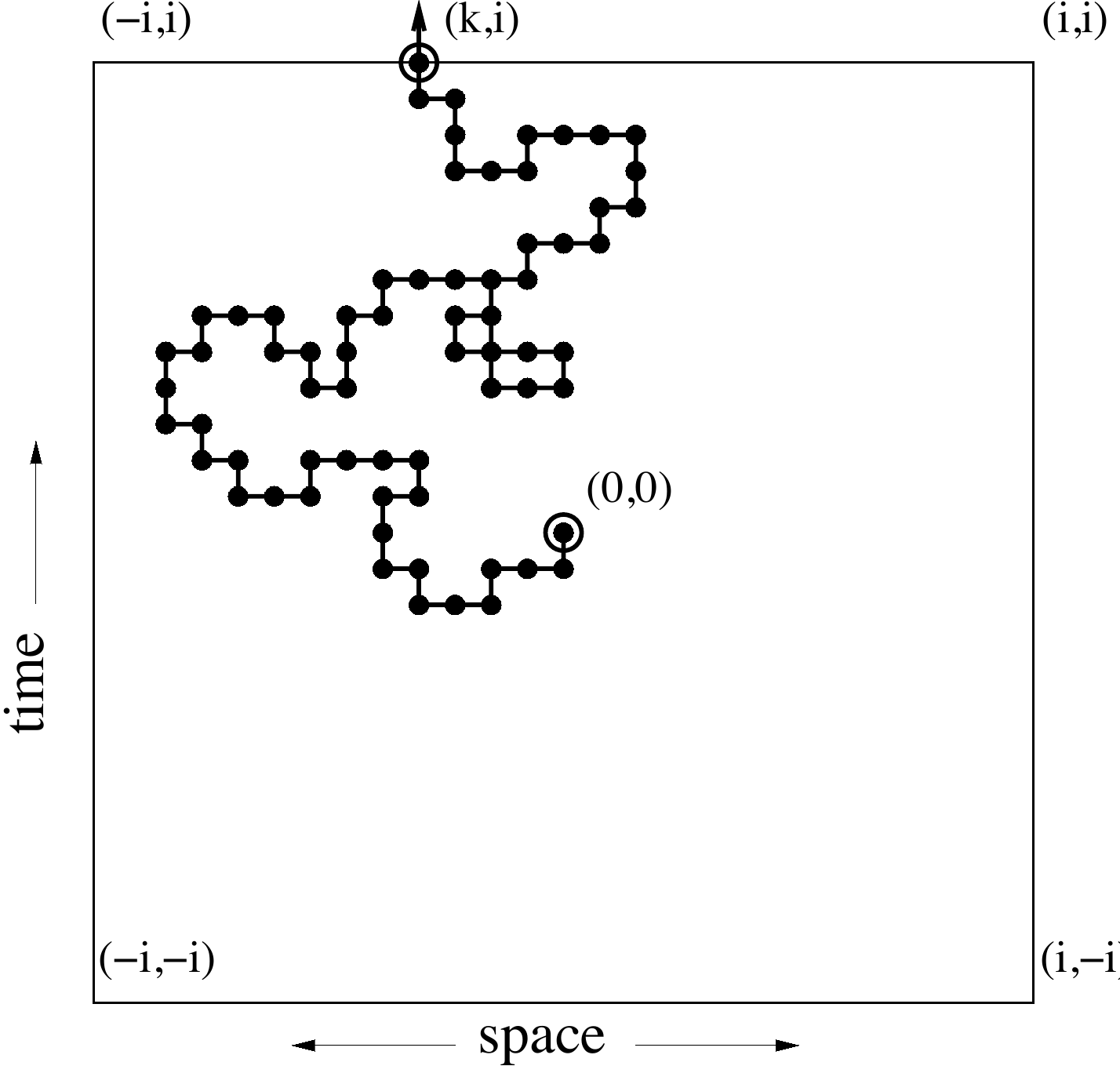,height=7cm}
}
\subfigure[Escaping the $2k\times 2k$ square]{
\epsfig{file=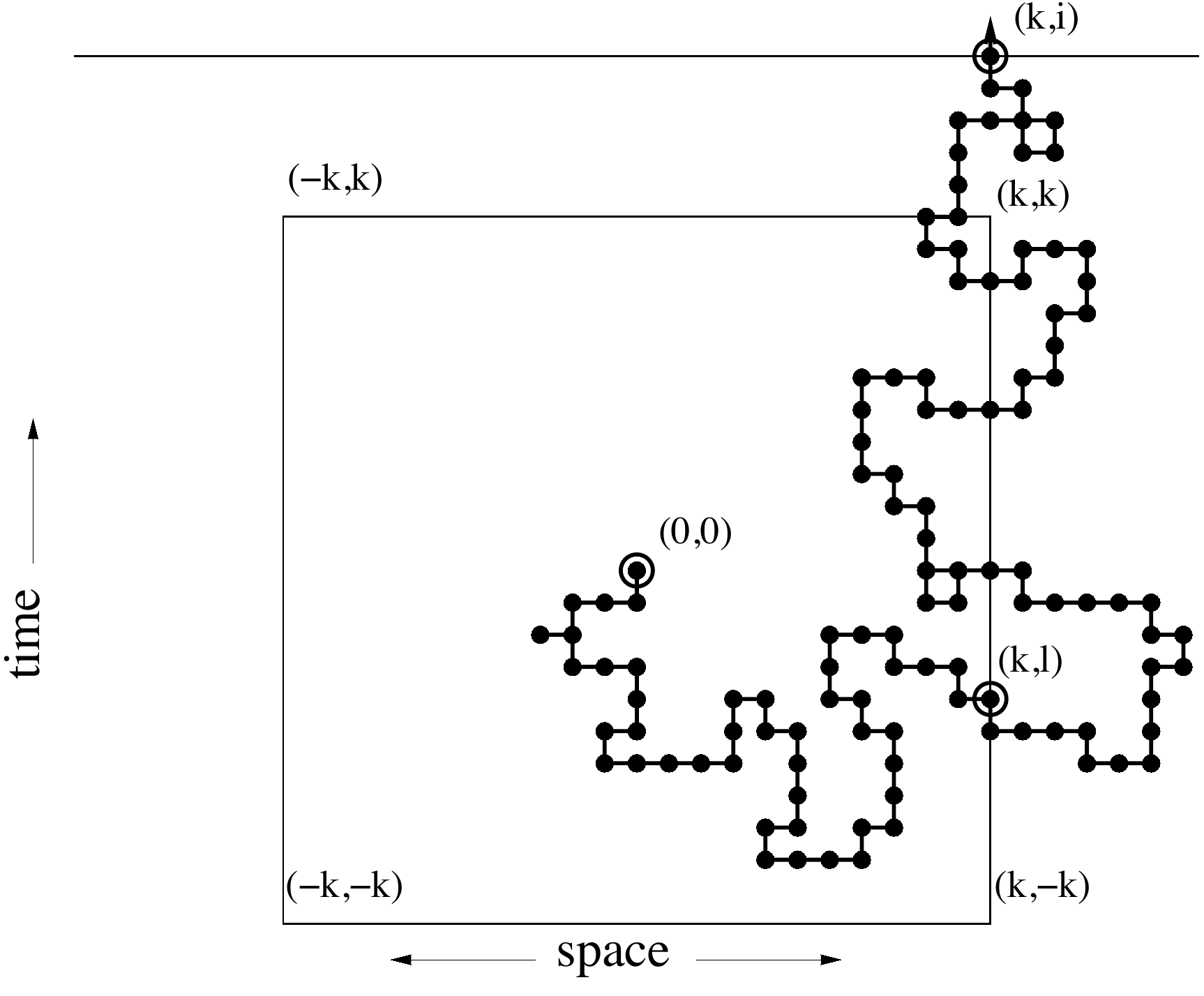,height=7cm}
}
\caption{\label{fig:hittingtimes} Hitting times}
\end{center}
\end{figure}

\begin{lemma}\label{lem:randomwalk}
Let $Z= (X,Y)$ be a simple random walk in $\Z^2$ starting from $0$. For every $i \in \Z$ define $T_i = \inf\{ t\geq 0: Y_t = i\}$.
Then for all $k \in \{-i,\ldots, i\}$ we have
\[
\prstart{X_{T_i} = k}{0} \geq \frac{1}{96 i}.
\]
\end{lemma}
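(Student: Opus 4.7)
The plan is to decouple the $X$- and $Y$-components of the walk via a time-change, reducing the two-dimensional bound to a one-dimensional local central limit theorem estimate. At each step of $Z=(X,Y)$ exactly one of the two coordinates moves by $\pm 1$, so we may write $X_t = \til X_{N_X(t)}$ and $Y_t = \til Y_{N_Y(t)}$, where $\til X$ and $\til Y$ are independent simple random walks on $\Z$ started at $0$, and $N_X(t),N_Y(t)$ count the $X$- and $Y$-moves in the first $t$ steps. Setting $\til T_i := \inf\{s:\til Y_s = i\}$ and $N := N_X(T_i)$, we obtain $X_{T_i} = \til X_N$, with $\til X$ independent of $(\til T_i, N)$. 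Conditionally on $\til T_i = s$, the count $N$ is a negative binomial with mean $s$ and variance $2s$, so $\pr{X_{T_i} = k} = \mathbb{E}\bigl[\pr{\til X_N = k \mid N}\bigr]$.

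The first substantive step is to show that $N$ is of order $i^2$ with uniformly positive probability. A reflection-principle identity for one-dimensional simple random walk gives $\pr{\til T_i \geq C i^2} \geq c_1 > 0$ for a suitable constant $C$. Conditionally on $\til T_i \geq C i^2$, Chebyshev applied to the negative-binomial law of $N$ gives $\pr{N \geq C i^2 / 2 \mid \til T_i} \geq c_2 > 0$, so $\pr{N \geq C i^2 / 2} \geq c_1 c_2$. Then the elementary one-dimensional local CLT states that for $n$ and $m$ of matching parity with $|m|\leq \sqrt n$,
\[
\pr{\til X_n = m} = 2^{-n}\binom{n}{(n+m)/2} \geq \frac{c_3}{\sqrt n},
\]
which follows directly from Stirling. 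Choosing $C$ large enough that $i \leq \sqrt{N/2}$ whenever $N \geq C i^2/2$, and using that $N$ assumes each parity with positive probability (losing at worst a factor of $2$ in coupling the parity of $N$ to that of $k$), we get $\pr{\til X_N = k \mid N} \geq c_4/\sqrt N \geq c_5/i$ on the event $\{N \geq C i^2/2\}$. Multiplying the two lower bounds yields $\pr{X_{T_i} = k} \geq c/i$ for an explicit absolute constant.

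The main obstacle is purely quantitative: the scheme above produces the correct order $1/i$ with no conceptual difficulty, but extracting the specific constant $1/96$ requires a careful accounting at each step --- the tail $\pr{\til T_i \geq C i^2}$, the deviation probability for $N$, the Stirling bound on the central binomial coefficient, and the factor of two lost from parity matching. None of these steps is conceptually subtle; it is simply a matter of making the constants cooperate, with some slack to spare, as the continuum picture confirms: for Brownian motion $X_{T_i}/i$ is Cauchy, so the heuristic density at $|k|\leq i$ is about $1/(2\pi i)$, comfortably above $1/(96 i)$.
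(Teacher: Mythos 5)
Your reduction to a one--dimensional problem via the time change $X_{T_i}=\til X_N$, $N=N_X(T_i)$, is sound and is essentially the same decoupling the paper uses (the paper phrases it with lazy walks). The gap is in the final chain of inequalities: you claim that on the event $\{N\geq Ci^2/2\}$ one has $\pr{\til X_N=k\mid N}\geq c_4/\sqrt{N}\geq c_5/i$. The second inequality points the wrong way. A \emph{lower} bound $c_4/\sqrt N\geq c_5/i$ requires an \emph{upper} bound $N\leq (c_4/c_5)^2 i^2$, whereas you have only established a lower bound on $N$. This matters because $N$ is genuinely unbounded above on a non-negligible set: $\E{N}=\E{\til T_i}=\infty$, since the hitting time of level $i$ by a one--dimensional walk has a heavy tail $\pr{\til T_i>t}\asymp i/\sqrt t$. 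To repair the argument you must show that $N$ lands in a two-sided window $[c\,i^2, C\,i^2]$ with probability bounded below, which needs the additional ingredient $\pr{\til T_i>C'i^2}\leq 1-c_1-\delta$ (available from the reflection principle or the local CLT applied to $\pr{|\til Y_t|<i}$), followed by Chebyshev for $N$ given $\til T_i$. With that patch the scheme does give $\pr{X_{T_i}=k}\geq c/i$; however, your assertion that the constants then cooperate ``with some slack to spare'' to reach $1/96$ is optimistic --- a straightforward accounting of the window probabilities, the parity factor, and the Stirling constant lands noticeably below $1/96$, so hitting the stated constant would itself require work (though only the order $1/i$ is used downstream).

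For comparison, the paper takes a different and cleaner route that avoids the local CLT and all control of $N$: it first shows $\prstart{X_{T_i}=0}{0}\geq\prstart{X_{T_i}=k}{0}$ (the hitting location has its mode at $0$, via $P^t(x,y)\leq P^t(x,x)$ for lazy walks), combines this with the symmetry bound $\prstart{X_{T_i}\in\{-i,\dots,i\}}{0}\geq 1/4$ to get $\prstart{X_{T_i}=0}{0}\geq 1/(12i)$ by pigeonhole, and then handles general $k$ by forcing the walk to first exit the box of radius $|k|$ through the side $\{x=k\}$ (probability $\geq 1/4$) and applying the $k=0$ case from the new starting point. That argument produces the explicit constant with no asymptotics and no parity bookkeeping.
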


\begin{proof}[{\bf Proof}]

Notice that $X_{T_i}$ has the same distribution for both a lazy simple random walk and a non-lazy one.
So it suffices to prove the lemma in the case of a lazy simple random walk in $\Z^2$. We realize a lazy simple random walk in $\Z^2$ as follows:

Let $V$, $W$ be two independent lazy simple random walks in $\Z$. Let $\xi_1,\xi_2,\ldots$  be i.i.d.\ random variables taking values
$1$ or $2$ with equal likelihood. Now for all $k$ define
$r(k) = \sum_{i=1}^{k}\1(\xi_i=1)$ and let
\[
(X_k,Y_k) = (V_{r(k)},W_{k-r(k)}).
\]
Then it is elementary to check that $Z = (X,Y)$ is a lazy simple random walk in $\Z^2$. \newline
We first show that for all $k \in \{-i,\ldots,i\}$,
\begin{align}\label{eq:oversusk}
\prstart{X_{T_i} = 0}{0} \geq \prstart{ X_{T_i} = k}{0}.
\end{align}
Since $V$ is independent of $T_i$ and of $r(\ell)$ for all values of $\ell$, we get for all $k$
\begin{align}\label{eq:returnk}
\prstart{X_{T_i} = k}{0} & = \sum_{m,\ell} \prstart{X_m = k, T_i = m, r(m) = \ell}{0} = \sum_{m,\ell}\prstart{V_\ell = k}{0} \prstart{r(m)=\ell,T_i =m}{0}.
\end{align}
It is standard (see, for example, \cite[Lemma~12.2]{Mixingtimesbook}) that for a lazy simple random walk on $\Z$, if $P^t$ stands for the transition probability in $t$ steps
then $P^t(x,y) \leq P^t(x,x)$ for all $x$ and $y$. Applying this to $V_\ell$ and using~\eqref{eq:returnk} we obtain
\begin{align*}
\prstart{X_{T_i} = 0}{0}
\geq \sum_{m,\ell}\prstart{V_\ell = k}{0} \prstart{r(m)=\ell,T_i =m}{0} = \prstart{X_{T_i} = k}{0}
\end{align*}
and this concludes the proof of~\eqref{eq:oversusk}.

\medskip

For $k \in \Z$ we let
\[
\tau_k = \min\{t\geq 0: X_t \notin [-|k|+1,|k|-1]^2\}.
\]
Setting $A = \{Y_{\tau_i}=i\}$ we have by symmetry
\[
\prstart{X_{T_i} \in \{-i,\ldots, i\}}{0} \geq \prstart{A}{0} = \frac{1}{4}.
\]
Hence this together with~\eqref{eq:oversusk} gives that
\begin{align}\label{eq:lowerboundi}
\prstart{X_{T_i} = 0}{0} \geq \frac{1}{8i+4} \geq \frac{1}{12i}.
\end{align}
To finish the proof of the lemma it remains to show that for all $(k,i)$ with $k \in \{-i,\ldots, i\}$ we have
\begin{align}\label{eq:finalgoal}
\prstart{X_{T_i} = k}{0} \geq \frac{1}{96 i}.
\end{align}
For any $k \in \{-i,\ldots, i\}$ we have
\begin{align}\label{eq:sum}
\nonumber
\prstart{X_{T_i} =k}{0} &\geq \prstart{X_{\tau_k} = k, X_{T_i} = k}{0} = \sum_{\ell = -|k|,\ldots, |k|} \prstart{X_{\tau_k} = k,Y_{\tau_k} = \ell, X_{T_i} =k}{0}\\
& = \sum_{\ell} \prcond{X_{T_i} = k}{X_{\tau_k} =k,Y_{\tau_k} =\ell}{0} \prstart{X_{\tau_k} = k,Y_{\tau_k} =\ell}{0}.
\end{align}

Notice that by the strong Markov property, translation invariance and applying~\eqref{eq:lowerboundi} to $i-\ell$ we get
\[
\prcond{X_{T_i} = k}{X_{\tau_k} =k,Y_{\tau_k} =\ell}{0}  = \prstart{X_{T_{i-\ell}} = k}{(k,\ell)} = \prstart{X_{T_{i-\ell}} =0}{0} \geq \frac{1}{12(i-\ell)} \wedge 1\geq \frac{1}{24 i},
\]
since $-k <\ell < k$ and $k \leq i$.
Therefore, plugging this into~\eqref{eq:sum}, we obtain
\[
\prstart{X_{T_i} =k}{0} \geq \frac{1}{24 i}\sum_{\ell = -|k|,\ldots,|k|} \prstart{X_{\tau_k}=k,Y_{\tau_k} =\ell}{0} = \frac{1}{24 i}\prstart{X_{\tau_k} = k}{0} \geq \frac{1}{96 i}
\]
since by symmetry we have $\prstart{X_{\tau_k} = k}{0} \geq  1/4$.
This concludes the proof of the lemma.
\end{proof}

\begin{proposition}\label{pro:probupperbound}
There exists a randomized rabbit strategy in the game $\G_n$ so that against any hunter strategy the capture time $\tau$ satisfies
\[
\pr{\tau< n} \leq \frac{c}{\log n},
\]
where $c$ is a universal constant.
\end{proposition}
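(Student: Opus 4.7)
The plan is to take the rabbit's strategy to be $R_t := (U + S_t) \bmod n$ for $t = 0, \ldots, n-1$, where $U$ is uniform on $\Z_n$ independent of everything else, $(X, Y)$ is a simple random walk on $\Z^2$ from the origin, $T_t := \inf\{s : Y_s = t\}$, and $S_t := X_{T_t}$ (as in Lemma~\ref{lem:randomwalk}). Each $R_t$ is then uniform on $\Z_n$, so writing $N := \sum_{t=0}^{n-1} \1(R_t = H_t)$ for the number of collisions against an arbitrary hunter trajectory $(H_t)$, we have $\E{N} = 1$. Since
\[
\pr{\tau < n} \;=\; \pr{N > 0} \;=\; \frac{\E{N}}{\E{N \mid N > 0}} \;=\; \frac{1}{\E{N \mid N > 0}},
\]
the goal reduces to showing $\E{N \mid N > 0} \geq c \log n$ for a universal constant $c>0$.

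The key two-point estimate is that for $s \ne t$,
\[
\pr{R_s = H_s \mid R_t = H_t} \;=\; \pr{S_s - S_t \equiv H_s - H_t \pmod n} \;\geq\; \frac{1}{96\,|s-t|}.
\]
The equality holds because conditioning on $R_t = H_t$ is equivalent to fixing $U \equiv H_t - S_t \pmod n$, and $U$ is independent of $(S_r)_r$. For the inequality, the strong Markov property of $(X,Y)$ at $T_{\min(s,t)}$ shows that $S_s - S_t$ is distributed (up to sign) as $X_{T_{|s-t|}}$ starting from the origin, and the hunter's $1$-Lipschitz constraint gives $|H_s - H_t| \leq |s-t|$, so Lemma~\ref{lem:randomwalk} applies.

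A naive use of this bound via the first collision time $T$ gives only $\E{N \mid T = s} \geq 1 + \tfrac{1}{96} \log(n-s)$, which degenerates when $s$ is close to $n-1$. The plan to fix this is an interval decomposition combined with a ``first-or-last'' case split: partition $[0, n-1]$ into $\ell \asymp \log n$ blocks $I_1, \ldots, I_\ell$ of length $L \asymp n/\log n$, and let $N_I$, $T_I$, $T^*_I$ denote respectively the collision count, first and last collision times inside a block $I$. Since $T_I \leq T^*_I$, the event $\{N_I > 0\}$ forces at least one of (A) $T_I \leq \min(I) + L/2$ or (B) $T^*_I > \min(I) + L/2$. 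Forward summation of the two-point bound gives $\E{N_I \mid T_I = s} \geq 1 + c'\log L$ in case (A); a backward summation gives $\E{N_I \mid T^*_I = s} \geq 1 + c'\log L$ in case (B). Averaging over the two cases yields $\E{N_I \mid N_I > 0} \geq c'' \log n$, hence $\pr{N_I > 0} = \E{N_I} / \E{N_I \mid N_I > 0} = O(1/(\log n)^2)$, and a union bound over the $\ell \asymp \log n$ blocks gives $\pr{N > 0} = O(1/\log n)$ as required. The main technical point to check is the backward conditioning in case (B), since $(S_t)$ has no clean Markov description in reverse time; the trick is to note that the event ``no collisions in $I$ after time $s$'' is measurable with respect to $(S_r - S_s)_{r > s}$, whose distribution is independent of $S_s$ by shift-invariance, so past and future decouple given $R_s = H_s$ and the two-point bound transfers directly to $\pr{R_t = H_t \mid T^*_I = s}$ for $t < s$.
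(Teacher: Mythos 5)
Your proposal is correct, and it uses the same rabbit strategy and the same key ingredient (Lemma~\ref{lem:randomwalk} turned into the two-point bound $\pr{R_s=H_s\mid R_t=H_t}\ge \frac{1}{96|s-t|}$), but it resolves the one genuine difficulty --- the degeneration of $\log(n-s)$ when the first collision occurs near time $n$ --- by a different device. The paper simply extends the hunter's path to $2n$ steps (setting $H_{i+n}=H_n$) and bounds $\pr{K_n>0}\le \E{K_{2n}}/\econd{K_{2n}}{K_n>0}$: the numerator is still $2$, and conditioning on a collision before time $n$ leaves at least $n$ further steps, so the denominator is $\ge 1+\sum_{i=1}^{n-1}\frac{1}{96i}\ge c\log n$ uniformly in the collision time. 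Your route instead partitions $[0,n)$ into $\asymp\log n$ blocks, splits on whether the first collision lands in the first half or the last collision in the second half of a block, and sums the two-point bound forward or backward accordingly; this costs you a block decomposition, a union bound, and a time-reversal step, but it stays entirely inside the original $n$-step game. The delicate point you flag --- backward conditioning on the last collision time --- does go through: once $R_s=H_s$ pins down $U$ in terms of $S_s$, all other collision events become events about the increments $S_r-S_s$, and the families $(S_r-S_s)_{r>s}$ and $(S_t-S_s)_{t<s}$ are independent by the strong Markov property of the planar walk at $T_s$ (your phrase ``independent of $S_s$'' should really be ``independent of $(S_t-S_s)_{t<s}$'', but your conclusion that past and future decouple is exactly what is needed). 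Net assessment: a valid proof, somewhat longer than the paper's, whose only structural advantage is that it avoids modifying the hunter's trajectory beyond time $n$.
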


\begin{proof}[{\bf Proof}]
It suffices to prove the upper bound for a pure strategy of the hunter, i.e.\ a fixed path $(H_i)_{i<n}$.

Let $U$ be uniformly distributed on $\{0,\ldots,n-1\}$ and let $Z = (X,Y)$ be an independent simple random walk in $\Z^2$.
We define a sequence of stopping times as follows: $T_0= 0$ and inductively for $k\geq 0$,
\[
T_{k+1} = \inf\{t\geq T_k: Y_t = k+1\}.
\]
By recurrence of the two-dimensional random walk, for all $k$ we have $T_k <\infty$ a.s.

Define the position of the rabbit at time $0$ to be $R_0 = U$ and
$R_k = (X_{T_k} + U)\bmod n$ at time $k$.
Define $K_n$ to be the total number of collisions in the first $n$ steps, i.e.\ $K_n =\sum_{i=0}^{n-1} \1(H_i = R_i)$.
Since $\{\tau<n\} = \{K_n>0\}$, it suffices to show that for a positive constant $c$,
\begin{align}\label{eq:firstgoal}
\pr{K_n >0}  \leq \frac{c}{\log n}.
\end{align}
For the rest of the proof we extend the sequence $(H_i)_{i<n}$ by defining $H_{i+n} = H_n$ for all $i<n$.
Then we have
\begin{align}\label{eq:obvious}
\pr{K_n >0} \leq \frac{\E{K_{2n}}}{\econd{K_{2n}}{K_n>0}}.
\end{align}
In order to prove~\eqref{eq:firstgoal} we will bound the numerator and denominator separately. \newline
Since $U$ is uniform on $\{0,\ldots,n-1\}$ and
$X$ is independent of $U$, it follows that $R_i$ is uniform on $\{0,\ldots,n-1\}$ for every $i$.
Using that and the fact that the hunter and the rabbit move independently, we deduce
\begin{align}\label{eq:expectation}
\E{K_{2n}} = \sum_{i=0}^{2n-1} \pr{R_i =H_i } = \sum_{i=0}^{2n-1}\frac{1}{n} = 2.
\end{align}
For the term $\econd{K_{2n}}{K_n>0}$ we have
\begin{align}\label{eq:coprabbitmarkov}
\econd{K_{2n}}{K_n>0} &= \sum_{k=0}^{n-1} \econd{\sum_{i=k}^{2n-1} \1(R_i=H_i)}{\tau=k}   \frac{\pr{\tau=k}}{\pr{\tau<n}}.
\end{align}
Define $\til{R}_i = (R_{k+i} - R_k)\bmod n$ and $\til{H}_i = (H_{i+k} - H_k)\bmod n$. By the definition of the process $R$ it follows that $\til{R}$ has the same law as the process $R$.
By the Markov property of the process $(R_i)$ we get
\begin{align}\label{eq:condprobcoprob}
\econd{\sum_{i=k}^{2n-1} \1(R_i=H_i)}{\tau=k}  \geq \E{\sum_{i=0}^{n-1} \1(\til{R}_i = \til{H}_{i})}
 = 1 +\sum_{i=1}^{n-1} \prstart{{R}_i = \til{H}_i}{0}.
\end{align}
For all $i\geq 1$ and all $\ell \in \{-i,\ldots,i\}$, since $R_i = X_{T_i}\bmod n$, using Lemma~\ref{lem:randomwalk} we get
\begin{align}\label{eq:rabbit}
\prstart{R_{i} = \ell \bmod{n}}{0} \geq \prstart{X_{T_i} = \ell}{0} \geq  \frac{1}{64i}.
\end{align}
For all $i$ we have $\til{H}_i \in \{-i \bmod n,\ldots,i \bmod n\}$, since $\til{H}_0=0$. Using~\eqref{eq:rabbit} we get that for all $i\geq 1$
\[
\prstart{R_i =\til{H}_i}{0} \geq \frac{1}{96i}.
\]
The above inequality together with~\eqref{eq:coprabbitmarkov} and~\eqref{eq:condprobcoprob} yields
\[
\econd{K_{2n}}{K_n>0} \geq 1 + \sum_{i=1}^{n-1}\frac{1}{96 i} \geq c_1 \log n,
\]
where $c_1$ is a positive constant.
Thus~\eqref{eq:obvious}, the above inequality and~\eqref{eq:expectation}  conclude the proof of~\eqref{eq:firstgoal}.
\end{proof}

\begin{remark}\rm{
We refer to the strategy used by the rabbit in the proof above as the Cauchy strategy, because it is the discrete analogue
of the hitting distribution of planar Brownian motion on a line at distance $1$ from the starting point, which is the Cauchy distribution.
}
\end{remark}

\begin{proof}[{\bf Proof of Theorem~\ref{thm:cauchyrabbit}}]

The proof of the theorem follows by combining Lemma~\ref{lem:strategy} and Proposition~\ref{pro:probupperbound}.
\end{proof}

\section{ The hunter's strategy}\label{sec:upperbound}
In this section we give the proof of Theorem~\ref{thm:upperbound} by constructing a randomized strategy for the hunter. Before doing so, it is perhaps useful to consider the following natural strategy for the hunter: at time $0$ she chooses a random location and a random direction. Subsequently
at each time $t$ she continues in the same direction she has been walking with probability $(n-2)/n$, stops for one move and then continues in the same direction with probability $1/n$, and reverses direction with probability $1/n$.
We call this the ``zigzag'' strategy. 
We can prove that the zigzag strategy achieves expected capture time of order $n^{3/2}$ against any rabbit strategy. 
The following counter-strategy of the rabbit yields expected capture time of $n^{3/2}$ against the zigzag strategy: he starts at random, walks for $\sqrt{n}$ steps to the right in unit steps, then jumps to $2\sqrt{n}$ to the left and repeats. 

To achieve minimal expected capture time (up to a constant) our hunter moves not only in a random direction but at a random rate.

\begin{figure}
\begin{center}
\subfigure[Typical hunter path, zigzag strategy]{\label{fig:diagonals}
\epsfig{file=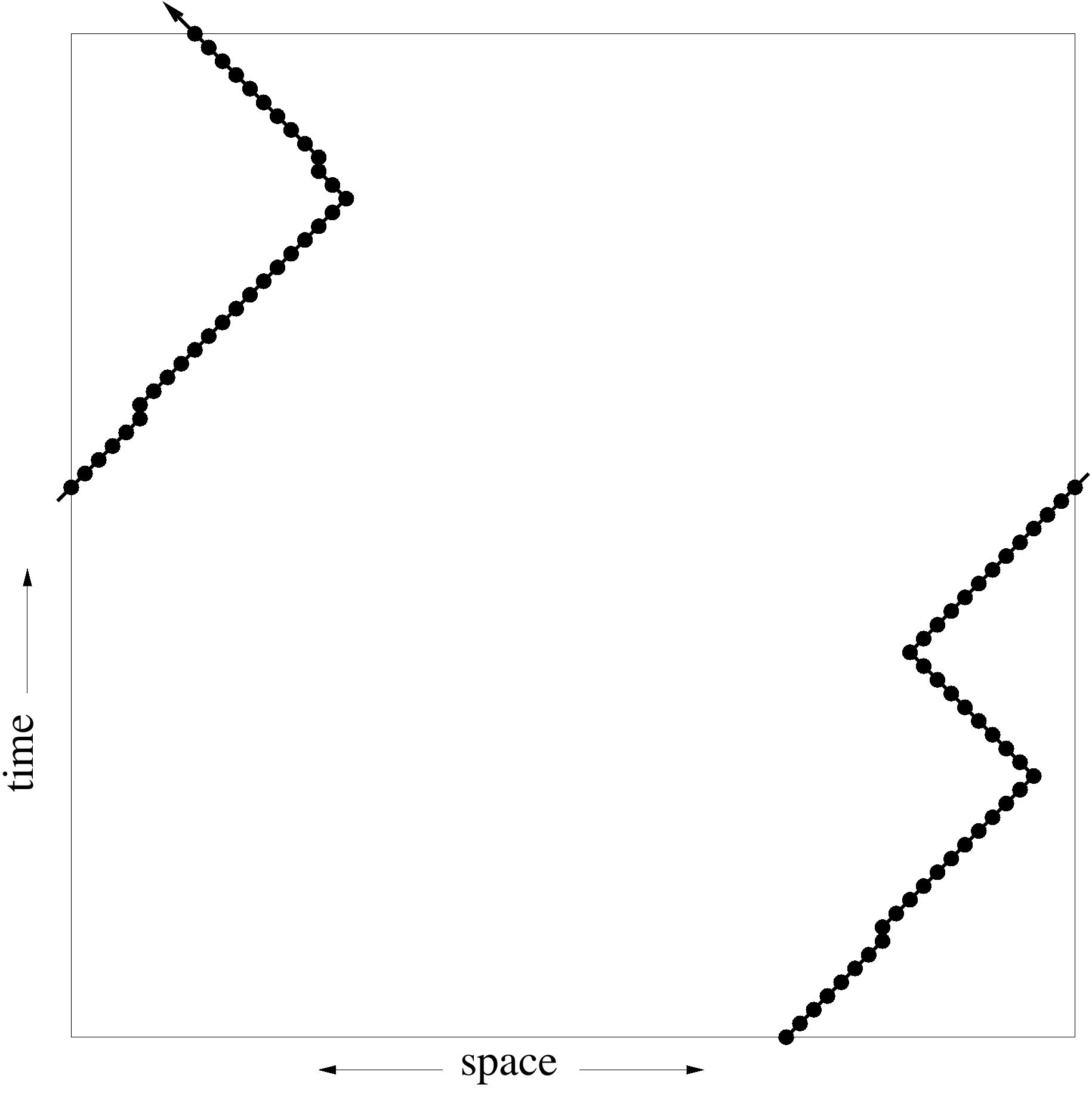,height=7cm}
}
\subfigure[Typical rabbit path, counter to zigzag strategy]{\label{fig:diagonalrabbit}
\epsfig{file=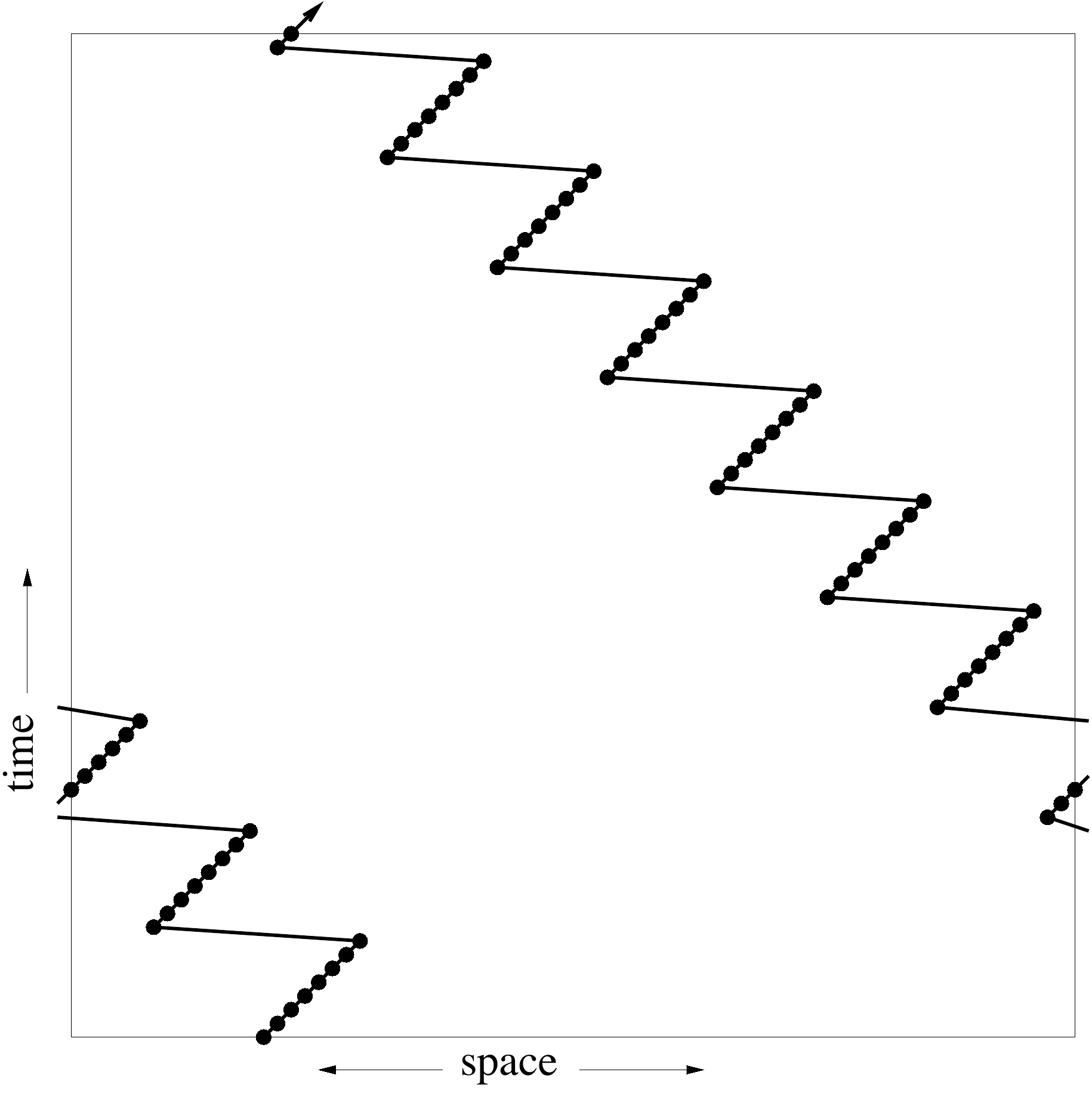,height=7cm}
}
\caption{Typical paths}
\end{center}
\end{figure}

\begin{proposition}\label{pro:upperbound}
There exists a randomized hunter strategy in the game $\G'_n$ so that against any rabbit strategy the capture time $\tau$ satisfies
\[
\pr{\tau <n } \geq \frac{c'}{\log n},
\]
where $c'$ is a universal positive constant.
\end{proposition}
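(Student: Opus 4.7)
The plan is to prove Proposition~\ref{pro:upperbound} by exhibiting a two-parameter randomized hunter strategy and lower bounding its collision probability with the second moment method. Let $J$ be uniform on $\{0,1,\ldots,n-1\}$ and $B$ uniform on $\Z_n$, independent of $J$, and define the hunter's position at time $t\in\{0,\ldots,n-1\}$ by
\[
H_t = \bigl(\lfloor Jt/n \rfloor + B\bigr) \bmod n.
\]
Since $J/n\in[0,1)$, the floor can increment by at most one per unit time, so $|H_{t+1}-H_t|\le 1$ and this is an admissible hunter strategy. Because $B$ is uniform on $\Z_n$ and independent of $J$, each marginal $H_t$ is uniform on $\Z_n$; hence, for any fixed (pure) rabbit path $(R_t)_{t<n}$, the number of collisions $N=\sum_{t=0}^{n-1}\1(H_t=R_t)$ satisfies $\E{N}=1$. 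Conditioning on the rabbit's randomness reduces the statement to showing $\pr{N\ge 1}\ge c'/\log n$ for every pure $(R_t)$.

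The heart of the argument is the matching upper bound $\E{N^2}=O(\log n)$. Fix $s<t$, set $d=t-s$, and let $\ell\equiv R_t-R_s\pmod n$. For $\{H_s=R_s,\,H_t=R_t\}$ to occur, conditional on $J$ the offset $B$ must equal a prescribed residue (probability $1/n$), and additionally
\[
\lfloor Jt/n\rfloor - \lfloor Js/n\rfloor \equiv \ell \pmod n.
\]
Writing $\lfloor jt/n\rfloor - \lfloor js/n\rfloor = jd/n + \delta_j$ with $\delta_j\in(-1,1)$, the sequence $j\mapsto jd/n$ is an arithmetic progression of step $d/n$, and for each residue class modulo $n$ the number of $j\in\{0,\ldots,n-1\}$ solving the displayed congruence is $O(n/d)$. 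Consequently $\pr{H_s=R_s,\,H_t=R_t}\le C/(nd)$.

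Adding up diagonal and off-diagonal contributions,
\[
\E{N^2} \le 1 + 2\sum_{d=1}^{n-1}(n-d)\cdot\frac{C}{nd} \le 1 + 2C\sum_{d=1}^{n-1}\frac{1}{d} = O(\log n),
\]
and the Paley--Zygmund inequality yields $\pr{\tau<n}=\pr{N\ge1}\ge (\E{N})^2/\E{N^2}\ge c'/\log n$ for a universal constant $c'>0$. The main technical point will be justifying the $O(n/d)$ count of admissible slopes: although the equidistribution of $jd/n\bmod n$ makes the bound intuitively clear, one must handle uniformly both small $d$ (many slopes per residue class) and $d$ close to $n$ (few slopes per class), and check that the implicit constant does not degrade through $\gcd(d,n)$.
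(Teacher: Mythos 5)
Your proposal is correct and follows essentially the same route as the paper: a hunter moving along a random line (the paper takes $H_\ell=\lceil an+b\ell\rceil \bmod n$ with $a,b$ uniform on $[0,1]$; you use a discretized slope) combined with the second-moment method and the key pairwise estimate $\pr{H_s=R_s,\,H_t=R_t}\le C/(nd)$. The $\gcd(d,n)$ worry you raise at the end is unfounded: since $\lfloor jt/n\rfloor-\lfloor js/n\rfloor$ is a nonnegative integer within distance $1$ of $jd/n$ and hence lies in $\{0,\dots,d\}$, the congruence modulo $n$ pins down a unique target value $v$, and the admissible $j$ are confined to the interval $\{j: jd/n\in[v-1,v+1]\}$ of length $2n/d$, giving the count $\le 2n/d+1\le 3n/d$ directly without any equidistribution input.
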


\begin{proof}[{\bf Proof}]

Let $R_\ell$ be the location of the rabbit on the cycle at time $\ell$, i.e.\ $R_\ell \in \{0,\ldots, n-1\}$.
We now describe the strategy of the hunter. Let $a,b$ be independent random variables uniformly distributed on $[0,1]$.
We define the location of the hunter at time $\ell$ to be $H_\ell = \lceil an +b\ell\rceil \bmod n$.

We again let $K_n$ denote the number of collisions before time $n$, i.e. $K_n = \sum_{i=0}^{n-1}\1(R_i=H_i)$. Then by the second moment method we have
\[
\pr{K_n>0} \geq \frac{(\E{K_n})^2}{\E{K_n^2}}.
\]
We now compute the first and second moments of $K_n$. For that, let $I_\ell$ denote the event that there is a collision at time $\ell$,
i.e.\ $I_\ell = \{H_\ell = R_\ell\}$. We first calculate $\pr{I_\ell}$. We have
\[
I_\ell = \{\lceil an +b\ell\rceil =R_\ell \} \cup \{\lceil an +b\ell\rceil -n = R_\ell \},
\]
which gives that
\[
I_\ell = \{ R_\ell -1 <  an +b\ell \leq R_\ell\} \cup \{ R_\ell + n -1< an + b\ell \leq R_\ell + n\}.
\]
Hence $\pr{I_\ell} = 1/n$ and
\[
\E{K_n} = \sum_{\ell=0}^{n-1} \pr{I_\ell} = 1.
\]
Let $j>0$, then it is easy to check that $\pr{I_\ell\cap I_{\ell+j}}\leq \frac{c}{jn}$ for a positive constant $c$.
Therefore
\begin{align*}
\E{K_n^2} &= \E{\left(\sum_{\ell = 0}^{n-1} I_\ell\right)^2}  = \E{K_n} + \sum_{\ell \neq m} \E{I_\ell \cap I_m}
= 1 + 2 \sum_{\ell= 0}^{n-1} \sum_{j=1}^{n-\ell-1} \pr{I_\ell \cap I_{\ell+j}} \\
&\leq 1+2 \sum_{\ell = 0}^{n-1} \sum_{j=1}^{n} \frac{c}{jn} \leq c' \log n,
\end{align*}
for a positive constant $c'$.
This way we get
\[
\pr{\tau< n }  = \pr{K_n >0} \geq \frac{c_1}{\log n}
\]
and this finishes the proof of the proposition.
\end{proof}

\begin{proof}[{\bf Proof of Theorem~\ref{thm:upperbound}}]
The proof of the theorem follows from Lemma~\ref{lem:strategy} and Proposition~\ref{pro:upperbound}.
\end{proof}

\section{The Kakeya connection}\label{sec:kakeya}

In this section we prove Theorem~\ref{thm:Kakeya}. We start by
showing how to get a Kakeya set given a strategy of the rabbit with probability at most $p_n$ of collision against any strategy of the hunter.

\begin{proposition}\label{pro:kakeya}
Given a strategy $\til{y}$ of the rabbit which ensures capture probability at most $p_n$ against any hunter strategy,
there is a Kakeya set of area at most $8p_n$ which is the union of $4n$ triangles.
\end{proposition}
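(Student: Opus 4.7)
The plan is to construct, for each rabbit realization $R=(R_0,\ldots,R_{n-1})$ sampled from $\tilde y$, a family of $n$ Euclidean triangles whose union (after rescaling and appropriate rotation) yields the desired Kakeya set. Specifically, for $k\in\{0,\ldots,n-1\}$, I would let $T_k(R)$ be the triangle in $\R^2$ with vertices $(0,R_k)$, $(n,R_k+k)$, and $(n,R_k+k+1)$. For every slope $s\in[k/n,(k+1)/n]$, the segment from $(0,R_k)$ to $(n,R_k+sn)$ lies inside $T_k(R)$; after rescaling by $1/n$ this segment has Euclidean length $\sqrt{1+s^2}\ge 1$. Hence $\tfrac{1}{n}\bigcup_k T_k(R)$ contains a unit segment of every slope in $[0,1]$, and taking four copies rotated by $\pi/4$, $\pi/2$, and $3\pi/4$ produces $4n$ triangles containing a unit segment in every direction.

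The heart of the proof is the expected area bound. Fix $(t,y)\in[0,n]\times[0,2n]$. The condition $(t,y)\in T_k(R)$ reads $R_k\in[y-t(k+1)/n,\,y-tk/n]$, an interval of length $t/n\le 1$; since $R_k$ is an integer, this forces $R_k=\lfloor y-tk/n\rfloor$. I would then define the candidate hunter path $\tilde H_k:=\lfloor y-tk/n\rfloor$ for $0\le k<n$. Because $t/n\in[0,1]$, successive floors differ by $0$ or $-1$, so $\tilde H$ (reduced modulo $n$) is a legitimate hunter strategy on $\Z_n$. If $(t,y)\in\bigcup_k T_k(R)$, then $\tilde H$ must capture $R$; by the hypothesis on $\tilde y$,
\[
\pr{(t,y)\in\bigcup_k T_k(R)}\le p_n.
\]
Integrating this over the ambient rectangle $[0,n]\times[0,2n]$ gives $\E{\vol(\bigcup_k T_k(R))}\le 2n^2 p_n$.

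Rescaling by $1/n$ yields an expected area of at most $2p_n$ per rotated copy; the four rotations together contribute at most $8p_n$ in expectation, so some realization of $R$ produces a $4n$-triangle Kakeya set of area at most $8p_n$. The one delicate calibration is matching the triangular shape to the hunter's Lipschitz constraint: $T_k$ is arranged so that its vertical width at time $t$ equals exactly $t/n$, which is precisely the tolerance that lets the rounded path $\tilde H$ track a line of slope $-t/n$ while still satisfying $|\tilde H_{k+1}-\tilde H_k|\le 1$. Once this calibration is in place, the rest is linearity of expectation and routine geometric bookkeeping over the four rotated copies.
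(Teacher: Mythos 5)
Your proof is correct and follows essentially the same route as the paper's: you identify each point of the plane with a (rounded) linear hunter trajectory, so that the area of the union of triangles becomes the capture probability of a random such trajectory against $\til{y}$, bounded by $p_n$ via Fubini. The only cosmetic differences are that you work in the transposed (slope, intercept) coordinates, discretize the linear path directly with a floor function rather than via the paper's general Lipschitz-to-lattice-path reduction, and absorb the mod~$n$ wraparound into a doubled integration domain instead of translating split triangles --- all yielding the same bound $8p_n$.
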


\begin{figure}
\begin{center}
\epsfig{file=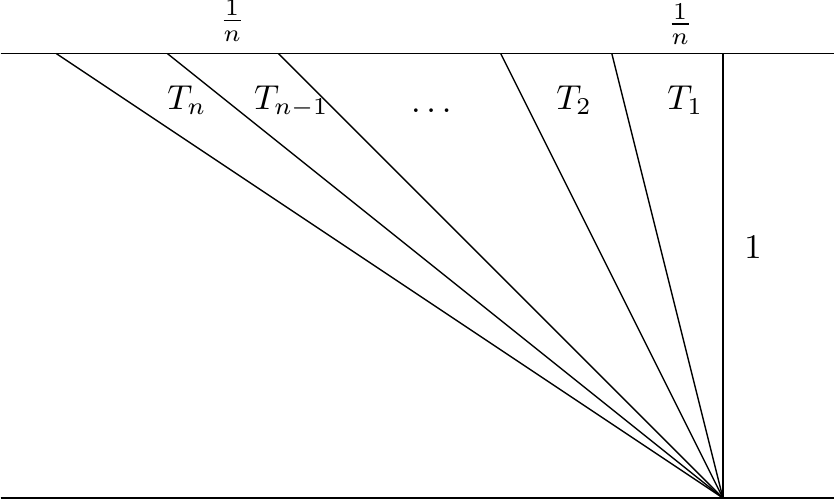, width=7 cm}
\caption{\label{fig:manytriangles} Triangles}
\end{center}
\end{figure}

\begin{proof}[{\bf Proof}]

Recall the definition of the set $\H$ of the allowed hunter paths.

First we slightly change the game and enlarge the set of allowed paths for the hunter, to include all functions $f:[0,n) \to  [0,n)$ that are $1$-Lipschitz.
Then we say that there is a collision in $[0,n)$ if for some $m \leq n-1$ there exists $t \in [m,m+1)$ such that $f(t) = R_m$.
We first show that if $f$ is $1$-Lipschitz, then
\begin{align}\label{eq:goalvalue}
\min_{y \in \Delta_r}\prstart{\text{\rm collision in } [0,n)}{f y} \leq p_n,
\end{align}
where $f$ stands for $\delta_f$ with a slight abuse of notation.
\newline
In order to prove~\eqref{eq:goalvalue}, for every $f$ that is $1$-Lipschitz we will construct a path $h \in \H$ such that for all $y \in \Delta_r$
\begin{align}\label{eq:goalcollision}
\prstart{\text{\rm collision in } [0,n)}{f y} \leq
\prstart{\tau<n }{{h} y}.
\end{align}
We define $h(m)$ for every $m \in \{0,\ldots,n-1\}$. By the $1$-Lipschitz property of $f$, note that the image $f([m,m+1))$ can contain at most one integer.
If there exists $k\in \Z_n$ such that $k \in f([m,m+1))$, then we set $h(m) = k$. If there is no such integer $k$, then we set $h(m) = \lfloor f(m) \rfloor$.
The $1$-Lipschitz property then gives that $h\in \H$.
Since the rabbit only jumps on $\Z_n$, the function $h$ constructed this way satisfies~\eqref{eq:goalcollision}.

Applying~\eqref{eq:goalcollision} to the strategy $\til{y}$ of the rabbit and using the assumption gives that for all $f$ that are $1$-Lipschitz
\begin{align}\label{eq:pnbound}
\prstart{\text{\rm collision in } [0,n)}{f \til{y}} \leq
p_n.
\end{align}

Next we consider the hunter strategy in which she chooses a linear function $f_{a,b}(t) = (an+tb) \bmod n$, where $a,b$ are independent and
uniformly distributed on the unit interval $[0,1]$.
Suppose that during the time segment $[m, m + 1)$ the rabbit is located at position $z_m$. Then the set of values $(a, b)$ such that $z_m \in f_{a,b}([m,m+1))$ is
$T(m,z_m) = T_{\ell}(m,z_m) \cup T_r(m,z_m)$, where
\begin{align*}
&T_\ell(m,z_m) = \{an+bm \leq z_m < an+b(m+1)\}\cap [0,1]^2 \ \text{ and } \\
& T_r(m,z_m)= \{an+bm -n\leq z_m <an+b(m+1)-n\}\cap [0,1]^2,
\end{align*}
as illustrated in Figure~\ref{fig:twopieces}.
If the rabbit chooses the sequence of locations $(z_k)_{k=0}^{n-1}$, then he will be caught by the hunter using the strategy above with
probability $A(z)$ which is the area of $\cup_m T(m,z_m)$. Therefore the objective of the rabbit is to minimize the area of $A(z)$.
￼￼
We have
\[
A(R) = \prcond{\text{collision in } [0,n)}{(R_m)}{},
\]
and hence since from~\eqref{eq:pnbound} we have $\prstart{\text{collision in } [0,n)}{f_{a,b} \til{y}} \leq \int_0^1\int_0^1 p_n \,da db = p_n$,
we deduce that for the strategy $\til{y}$
\[
\E{A(R)} \leq p_n.
\]

\begin{figure}
\begin{center}
\epsfig{file=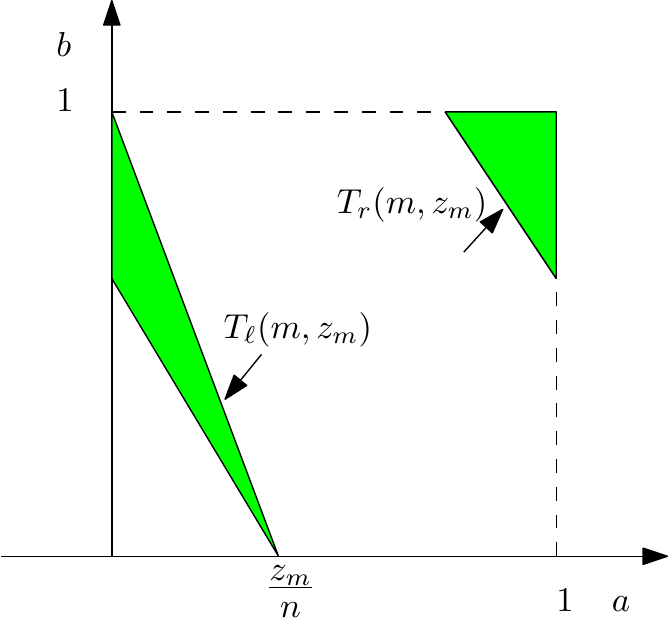, width=7 cm}
\caption{\label{fig:twopieces} $T(m,z_m) = T_\ell(m,z_m) \cup T_r(m,z_m)$}
\end{center}
\end{figure}

Now we slightly change the sets $T(m,z_m)$, since they could consist of two disjoint triangles as illustrated in Figure~\ref{fig:twopieces}.
So if we write $T'(m,z_m) = T_\ell(m,z_m) \cup (T_r(m,z_m) - (1,0))$, then it is easy to see that $T'(m,z_m)$ is always a triangle.

Hence taking the union  $\cup_m T'(m,z_m)$ gives a union of $n$ triangles with
\[
\textrm{Area}(\cup_m T'(m,z_m)) \leq 2 \textrm{Area}(\cup_m T(m,z_m)),
\]
and hence $\E{\text{Area}(\cup_m T'(m,z_m))} \leq 2p_n$.

The triangles $T_i$ in Figure~\ref{fig:manytriangles} contain unit segments in all directions that have an angle in $[0,\pi/4]$ with the vertical axis.
Since the triangles $T'(m,z_m)$ are obtained from the triangles $T_i$ by horizontal translation, the union $\cup_mT'(m,z_m)$ also contains a unit segment
in all these directions. Hence if we take $4$ copies of this construction suitably rotated obtaining $4n$ triangles gives a Kakeya set with area at most $8p_n$.
\end{proof}

\begin{proof}[{\bf Proof of Theorem~\ref{thm:Kakeya}}]

If the rabbit uses the Cauchy strategy, then by Proposition~\ref{pro:probupperbound} we get that the probability of collision in $n$ steps against any strategy
of the hunter is at most $p_n = c/\log n$. Now we can apply Proposition~\ref{pro:kakeya} to get $4n$ triangles of area at most $8p_n = 8c/\log n$.
For a sample of this random construction with $n=$ see Figure~\ref{fig:sim-triangles}.
\end{proof}

\section{Kakeya sets from the Cauchy process}\label{sec:Cauchyproc}

Our goal in this section is to prove Theorem~\ref{thm:Cauchyproc}. We first recall some notation.

Let $(X_t)$ be a Cauchy process, i.e., $X$ is a stable process with values in $\R$  and the density of $X_1$ is given by $(\pi(1+x^2))^{-1}$. Let $\F_t = \sigma(X_s, s\leq t)$ be its natural filtration and let $\til{\F}_t = \cap_{n} \F_{t+1/n}$. Then $(\til{\F}_t)$ is right continuous and $X$ is adapted to $(\til{\F})$.

For any set $A$ we denote by $A(\epsilon)$ the $\epsilon$-neighbourhood of $A$, i.e.\ $A(\epsilon)  = A+\B(0,\epsilon)$. 

Let $F$ be a subset of $[0,1]$ and $\delta>0$. For $a\in [0,1]$ we define
\[
V_a(F,\delta) = \cup_{s\in F} \B(X_s+as, \delta).
\]

Recall the definition $\Lambda = \{ (a,X_t+at): 0\leq a\leq 1, 0\leq t\leq 1\}$.


\begin{lemma}\label{cl:voltr}
Let $M>0$ be a constant, $t>r$ and $I=[u,u+t]$ be a subinterval of $[0,1]$. Then there exists a constant $c=c(M)$ so that for all $a \in [-M,M]$
\[
\E{\vol(V_a(I,r) )} \leq \frac{ct}{\log(t/r)} + 2r.
\]
\end{lemma}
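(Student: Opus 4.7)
My plan is to use the strong Markov property of the Cauchy process at the first hit time, paired with a time-reversal argument. First, by the scaling $X_h \eqdist h X_1$ and an explicit lower bound on the Cauchy density $\pi^{-1}(1+x^2)^{-1}$ on compact sets, I would show
\[
\int_0^T \pr{|X_h + ah| \le r}\,dh \;\ge\; c_0(M)\, r \log(T/r), \qquad T \ge r,\ a \in [-M,M],
\]
for some $c_0 = c_0(M) > 0$. This quantifies the expected time that the drifted Cauchy process spends within distance $r$ of the origin.

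Next, fix $y \in \R$ and set $\sigma_y := \inf\{s \in I : X_s + as \in \B(y,r)\}$, a $\til{\F}$-stopping time. On $\{\sigma_y < \infty\}$ the triangle inequality gives $|X_{\sigma_y+h} + a(\sigma_y+h) - y| \le r + |Z_h + ah|$, where $Z_h := X_{\sigma_y+h}-X_{\sigma_y}$ is, by the strong Markov property, an independent Cauchy process. Combining with the preliminary estimate, for $y$ with $\sigma_y \le u + t/2$ (and $t \ge 2r$),
\[
\E{\int_{\sigma_y}^{u+t} \1(|X_s + as - y| \le 2r)\,ds \,\bigg|\, \F_{\sigma_y}} \;\ge\; c_0\, r \log(t/(2r)).
\]
Integrating over $y$ and using the Fubini identity $\int_\R \int_I \1(|X_s + as - y| \le 2r)\,ds\,dy = 4rt$ yields $\E{\vol(\{y : \sigma_y \le u+t/2\})} \le 4t/(c_0 \log(t/(2r)))$.

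To cover the complementary $y$'s I invoke time-reversal: the process $X'_{s'} := X_{u+t} - X_{u+t-s'}$, $s' \in [0,t]$, is again a Cauchy process (finite-dimensional distributions of a Lévy process are preserved under time reversal). Setting $\rho_y := \sup\{s \in I : X_s+as \in \B(y,r)\}$, the substitution $s \mapsto u+t-s$ together with $y'' := X_{u+t}+a(u+t)-y$ converts $\{\rho_y \ge u+t/2\}$ into $\{\sigma^{X'}_{y''} \le t/2\}$, and $y\mapsto y''$ preserves Lebesgue measure. Applying the previous bound to $X'$ gives $\E{\vol(\{y : \rho_y \ge u+t/2\})} \le 4t/(c_0 \log(t/(2r)))$; since $\sigma_y \le \rho_y$, every $y \in V_a(I,r)$ is in one of the two sets, so $\E{\vol(V_a(I,r))} \le 8t/(c_0 \log(t/(2r)))$ for $t \ge 2r$. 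For $t \ge 4r$ one has $\log(t/(2r)) \ge \tfrac12 \log(t/r)$, and for the remaining range $t \in (r, 4r]$ a direct Fubini estimate --- using $\pr{y \in V_a(I,r)} \le 1$ for $|y| = O(t)$ and $\pr{y \in V_a(I,r)} \le C t^2/y^2$ for $|y| \gg t$ (from the expected-occupation bound combined with the Cauchy tail $\pr{X_s \in \B(y,r)} \sim rs/y^2$) --- yields $\E{\vol(V_a(I,r))} \le C(M)\, t = O(r)$, which is absorbed into the $+2r$ summand by choosing $c = c(M)$ large enough.

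The principal obstacle is the time-reversal bookkeeping: one must verify that $X'$ is a Cauchy process with its own strong Markov property, and that the path-dependent bijection $y \mapsto y''$ indeed translates the event $\{\rho_y \ge u+t/2\}$ into $\{\sigma^{X'}_{y''} \le t/2\}$ in a Lebesgue-measure-preserving way. A forward-only alternative is to apply the bound on the first-half set recursively via $\vol(V_a(I,r)) \le \vol(V_a(I_1,r)) + \vol(V_a(I_2,r))$ together with stationarity of $X$'s increments, yielding $f(t) \le 4t/(c_0 \log(t/(2r))) + f(t/2)$; but one then has to bound the iterated sum $\sum_k (t/2^k)/\log(t/(2^{k+1} r))$ by $O(t/\log(t/r))$, which requires a careful splitting of the sum at $k \approx \log_2(t/r)/2$.
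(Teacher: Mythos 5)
Your argument is correct in substance and runs on the same engine as the paper's proof: bound $\pr{y \in V_a(I,r)}$ by the ratio of the expected occupation of $\B(y,2r)$ (which integrates over $y$ to $O(rt)$ by Fubini) to the conditional occupation after the first hit, the latter bounded below by $c(M)\, r\log(\cdot)$ using the explicit Cauchy density exactly as in your preliminary estimate. Where you genuinely diverge is in how you guarantee a residual time window of length comparable to $t$ after the first hitting time $\sigma_y$: you restrict to $\sigma_y \le u+t/2$ and recover the remaining $y$'s by time reversal (or, in your stated alternative, by a dyadic recursion), which obliges you to verify that the reversed process is again a Cauchy process with the strong Markov property, to handle the discrepancy between $X_{(u+t-s)-}$ and $X_{u+t-s}$ at jump times, and to track the path-dependent isometry $y \mapsto y''$. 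The paper sidesteps all of this with one stroke: it compares $Z_x = \int_0^t \1(\cdots)\,ds$ with $\til{Z}_x = \int_0^{2t} \1(\cdots)\,ds$, so that \emph{every} hit before time $t$ is automatically followed by a window of length at least $t$, at the harmless cost of doubling the numerator to $4rt$; the inequality $\pr{Z_x>0} \le \E{\til{Z}_x}/\econd{\til{Z}_x}{Z_x>0}$ then gives the bound for all of $V_a(I,r)$ at once. Adopting that device would delete the entire second half of your argument (and the recursion alternative). One small correction: the $O(C(M)\,r)$ bound you obtain in the regime $r < t \le 4r$ cannot be ``absorbed into the $+2r$ summand,'' since that summand carries no adjustable constant; it is instead absorbed into the first term via $ct/\log(t/r) \ge cr/\log 4$ on that range, by enlarging $c=c(M)$.
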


\begin{proof}[{\bf Proof}]

%

By translation invariance of Lebesgue measure and the stationarity of $X$, it suffices to prove the lemma in the case when $I=[0,t]$.

If  $\tau_{\B(x,r)} = \inf\{s\geq 0: X_s+as \in \B(x,r)\}$, then we can write 
\begin{align}\label{eq:volumedec}
\E{\vol(\cup_{s\leq t}\B(X_s+as,r) )} = \int_{\R} \pr{\tau_{\B(x,r)} \leq t} \,dx = 2r + \int_{\R \setminus \B(0,r)} \pr{\tau_{\B(x,r)} \leq t} \,dx.
\end{align}
For $x\notin \B(0,r)$ we define $Z_x = \int_{0}^{t} \1(X_s + as \in \B(x,r)) \,ds$ and $\til{Z}_x = \int_{0}^{2t} \1(X_s + as \in \B(x,r))$. By the c{\`a}dl{\`a}g property of $X$ we deduce that up to zero probability events we have $\{\tau_{\B(x,r)}\leq t\} = \{Z_x >0\}$. So it follows that
\begin{align}\label{eq:numden}
\pr{\tau_{\B(x,r)} \leq t} = \pr{Z_x >0} \leq \frac{\E{\til{Z}_x}}{\econd{\til{Z}_x}{Z_x>0}}.
\end{align}
For the numerator we have
\[
\E{\til{Z}_x} = \int_{0}^{2t} \int_{\B(x,r)} p_s(0,y) \,dy \,ds = \int_{0}^{2t} \int_{\B(0,r)} p_s(0,x+y) \,dy \,ds,
\]
where $p_s(0,y)$ stands for the transition density in time $s$  of the process $(X_u+au)_u$. We now drop the dependence on $\B(x,r)$ from $\tau_{\B(x,r)}$ to simplify notation. For the conditional expectation appearing in the denominator in~\eqref{eq:numden} we have

\begin{align*}
\econd{\til{Z}_x}{Z_x>0} &= \econd{\int_{\tau}^{2t} \1(X_s + as \in \B(x,r)) \, ds}{\tau\leq t} \\
&= \econd{\int_{0}^{2t-\tau} \1(X_{s+\tau} + a(s+\tau) \in \B(x,r)) \,ds}{\tau\leq t}   \\
& =  \econd{\int_{0}^{2t-\tau} \1(X_{s+\tau} -X_\tau + as+X_\tau + a\tau) \in \B(x,r)) \,ds}{\tau\leq t} 
\\
 &\geq \min_{y \in \B(x,r)}\E{\int_0^t \1(X_s + as + y\in \B(x,r)) \,ds},
\end{align*}
where in the last step we used the strong Markov property of $X$ and that $X_\tau + a\tau =y  \in \B(x,r)$.
We now bound from below the expectation appearing in the minimum above. Since we assumed that $r<t$, we have
\begin{align*}
&\E{\int_0^t \1(X_s + as + y\in \B(x,r)) \,ds} = \int_{0}^{t} \int_{\B\left(\frac{x}{s} - \frac{y}{s} - a,\frac{r}{s}\right)} \frac{1}{\pi(1+z^2)} \,dz\,ds \\
& \geq \int_{r}^{t} \frac{2r}{(1 + (M+3)^2)  \pi s} \,ds = c_1r \log \frac{t}{r}.
\end{align*}
The inequality follows from the observation that when $s\geq r$ and $y \in \B(x/s- y/s - a,r/s)$, then $|y|\leq M+3$, since $a\in [-M,M]$.
Hence we infer that for all $x$
\[
\econd{\til{Z}_x}{Z_x>0} \geq c_1 r \log(t/r).
\]
So putting all things together we have
\[
\int_{\R \setminus \B(0,r)} \pr{Z_x >0} \,dx \leq \frac{\int_{\R\setminus \B(0,r)}\int_{0}^{2t} \int_{\B(0,r)} p_s(0,x+y)\,dy\,ds\,dx}{c_1 r \log(t/r) } \leq \frac{4rt}{c_1 r \log(t/r)} = \frac{c_2 t}{\log(t/r)}
\]
and this together with~\eqref{eq:volumedec} completes the proof of the lemma.
\end{proof}

%
%
%

\begin{claim}\label{cl:stoppingtime}
Let $(\F_t)$ be a right continuous filtration and $(X_t)$ a c{\`a}dl{\`a}g adapted process taking values in $\R^d$, $d\geq 1$. Let $D$ be an open set in $\R^d$ and $F$ a subset of $[0,1]$. Then
\[
\tau = \inf\{ t\in F: X_t \in D\}
\]
is a stopping time.
\end{claim}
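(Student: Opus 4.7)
The plan is to verify $\{\tau < t\} \in \F_t$ for every $t \in [0, 1]$; since $(\F_t)$ is right continuous this suffices for $\tau$ to be a stopping time. The key ingredient is that the c{\`a}dl{\`a}g property of $X$ combined with the openness of $D$ means that whenever $X_s(\omega) \in D$, there exists $\varepsilon = \varepsilon(s, \omega) > 0$ with $X_u(\omega) \in D$ for every $u \in [s, s + \varepsilon)$; in particular $X_q(\omega) \in D$ for every rational $q$ in that right-neighborhood.

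When $F$ is an interval --- e.g.\ $F = [0, r]$ in the application to $\tau_{\B(x, r)}$ in Lemma~\ref{cl:voltr} --- the standard hitting time argument gives
\[
\{\tau < t\} = \bigcup_{q \in \Q \cap F \cap [0, t)} \{X_q \in D\},
\]
a countable union of $\F_q$-measurable events, all contained in $\F_t$. Inclusion $\supseteq$ is immediate. For $\subseteq$, given $s \in F \cap [0, t)$ with $X_s(\omega) \in D$, take $\varepsilon$ as above and pick a rational $q \in F \cap [s, \min(s + \varepsilon, t))$; such $q$ exists because $F$ contains a right neighborhood of $s$.

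For a general subset $F \subseteq [0, 1]$ at the level of generality stated, the cleanest route is the debut theorem. The set
\[
\Gamma := \{(s, \omega) \in [0, 1] \times \Omega : s \in F,\ X_s(\omega) \in D\}
\]
is progressively measurable provided $F$ is Borel: c{\`a}dl{\`a}g adaptedness of $X$ makes $(s, \omega) \mapsto X_s(\omega)$ progressively measurable, openness (hence Borel-measurability) of $D$ preserves this under preimage, and $F \times \Omega$ contributes a progressively measurable rectangle. Since $\tau$ is by definition the debut of $\Gamma$, the debut theorem of Dellacherie--Meyer applied to the right continuous filtration $(\F_t)$ (under the usual completeness assumption) yields that $\tau$ is a stopping time.

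The main obstacle, if one wishes to avoid invoking the debut theorem, is measurability bookkeeping for general $F$: the restriction to $F$ forces one to introduce a random left endpoint $\alpha_q(\omega) := \inf\{s \in [0, q] : X_u(\omega) \in D\ \forall\, u \in [s, q]\}$ and to establish its $\F_q$-measurability, then to rewrite the $F$-restricted event as $\{\alpha_q \leq \mu_q\}$ or $\{\alpha_q < \mu_q\}$ with the deterministic constant $\mu_q := \sup(F \cap [0, q])$. This is delicate because, contrary to the naive picture, $\{u : X_u(\omega) \in D\}$ need not be a union of left-closed right-open intervals (e.g.\ $X$ may approach $\partial D$ continuously before jumping away), so the approximation of the condition ``$X_u \in D$ on $[r', q]$'' by values at rationals requires some care.
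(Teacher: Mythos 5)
Your proof shares its skeleton with the paper's: reduce to showing $\{\tau<t\}\in\F_t$, recover $\{\tau\le t\}=\bigcap_n\{\tau<t+1/n\}\in\F_{t+}=\F_t$ from right continuity of the filtration, and use the fact that right continuity of $X$ together with openness of $D$ propagates the event $\{X_s\in D\}$ to a small interval of times to the right of $s$. Where you diverge is the treatment of a general $F$. You establish the elementary union formula only when $F$ is an interval and then invoke the Dellacherie--Meyer debut theorem for the general case; that is correct as far as it goes, but it imports hypotheses not present in the claim ($F$ Borel, a completed filtration) and far more machinery than is needed. The paper instead uses the one observation you are missing: an \emph{arbitrary} subset $F$ of $[0,1]$ is separable, so it admits a countable dense subset $F_\infty\subseteq F$, and one writes $\{\tau<t\}=\bigcup_{q\in F_\infty,\ q<t}\{X_q\in D\}$ directly --- the approximating times are taken inside $F$ itself rather than among the rationals, so no progressive measurability or debut theorem is required. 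This also makes your closing worry about the structure of $\{u:X_u(\omega)\in D\}$ moot: the union formula only ever uses forward propagation from a single hitting time, never membership on a whole interval. One caveat, which your interval-case phrase ``$F$ contains a right neighborhood of $s$'' correctly isolates, applies to both arguments: density of $F_\infty$ in $F$ alone does not produce an approximating point $q\ge s$ when $s\in F$ is not a right accumulation point of $F$ (e.g.\ the right endpoint of a closed interval). The paper elides this, but the repair is cheap and keeps the argument elementary: the set of points of $F$ that are not right accumulation points of $F$ is countable, so adjoining them to $F_\infty$ makes the displayed identity valid for every $F\subseteq[0,1]$.
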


\begin{proof}[{\bf Proof}]
Let $F_\infty$ be a countable dense subset of $F$. Then for all $t\in [0,1]$ we deduce
\[
\{\tau <t\} = \cup_{q \in F_\infty, q<t} \{X_q \in D\},
\]
since $X$ is c{\`a}dl{\`a}g and $D$ is an open set. Hence $\{\tau<t\} \in \F_t$. Writing
\[
\{\tau\leq t\} = \bigcap_{n} \{ \tau<t+1/n\},
\] 
we get that $\{\tau\leq t\} \in \F_{t+} = \F_t$.
\end{proof}

\begin{lemma}\label{lem:yestimate}
Let $I$ be a subinterval of $[0,1]$ of length $\sqrt{\epsilon}$. Define 
$Y=\int_b^d \vol(V_a(I,2\epsilon)) \,da$, where $-2<b<d<2$.
Then there exists a constant $c$ such that for all $\epsilon>0$ sufficiently small 
\[
\E{Y^2} \leq \frac{c\epsilon}{(\log(1/\epsilon))^2}.
\]
\end{lemma}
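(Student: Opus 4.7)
The plan is to reduce by Cauchy--Schwarz to a uniform bound on $\E{\vol(V_a(I,2\epsilon))^2}$ for $a\in[-2,2]$, and then estimate that second moment via a strong-Markov splitting at the first hitting time so as to invoke Lemma~\ref{cl:voltr} twice. Concretely, Cauchy--Schwarz in the $a$-integral gives
\begin{equation*}
Y^2\leq (d-b)\int_b^d \vol(V_a(I,2\epsilon))^2\,da,
\end{equation*}
and since $|d-b|<4$ it suffices to show that $\E{\vol(V_a(I,2\epsilon))^2}\leq C\epsilon/\log^2(1/\epsilon)$ uniformly in $a\in[-2,2]$.

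Fix such an $a$, set $U_a:=\vol(V_a(I,2\epsilon))$, and for each $x\in\R$ let $T(x):=\inf\{s\in I:X_s+as\in\B(x,2\epsilon)\}$, which is a stopping time by Claim~\ref{cl:stoppingtime}. Then Fubini gives $U_a=\int_\R\1(T(x)\leq\sup I)\,dx$ (up to null sets), so
\begin{equation*}
\E{U_a^2}=\iint_{\R^2}\pr{T(x)\leq\sup I,\ T(x')\leq\sup I}\,dx\,dx'.
\end{equation*}
I split by the ordering of $T(x),T(x')$; by symmetry it suffices to treat $\{T(x)\leq T(x')\leq\sup I\}$ and multiply by $2$. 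Conditioning on $\F_{T(x)}$ and applying the strong Markov property, the shifted process $\til X_{s'}:=X_{T(x)+s'}-X_{T(x)}$ is an independent copy of $X$, and on this event there exists $s'\in[0,\sqrt\epsilon]$ with $\til X_{s'}+as'\in\B(x'-Y_{T(x)},2\epsilon)$, where $Y_{T(x)}:=X_{T(x)}+aT(x)\in\B(x,2\epsilon)$. Since $|Y_{T(x)}-x|\leq 2\epsilon$, one has $\B(x'-Y_{T(x)},2\epsilon)\subseteq\B(x'-x,4\epsilon)$, and therefore
\begin{equation*}
\pr{T(x)\leq T(x')\leq\sup I}\leq \pr{T(x)\leq\sup I}\cdot q(x'-x),
\end{equation*}
where $q(v):=\pr{\exists s'\in[0,\sqrt\epsilon]:\til X_{s'}+as'\in\B(v,4\epsilon)}$ is a deterministic function of $v$.

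Integrating in $(x,x')$ and changing variables $v=x'-x$,
\begin{equation*}
\E{U_a^2}\leq 2\,\E{U_a}\cdot\int_\R q(v)\,dv = 2\,\E{\vol(V_a(I,2\epsilon))}\cdot\E{\vol(V_a([0,\sqrt\epsilon],4\epsilon))}.
\end{equation*}
For $\epsilon<1/16$, applying Lemma~\ref{cl:voltr} (with $M=2$, $t=\sqrt\epsilon$, and $r\in\{2\epsilon,4\epsilon\}$) to each factor yields a bound of order $\sqrt\epsilon/\log(1/\epsilon)$, whence $\E{U_a^2}\leq C\epsilon/\log^2(1/\epsilon)$ and thus $\E{Y^2}\leq 16C\epsilon/\log^2(1/\epsilon)$ as required. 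The main point to handle carefully is the strong-Markov step at $T(x)$ combined with the mild enlargement of the target ball from radius $2\epsilon$ to $4\epsilon$, forced by $|Y_{T(x)}-x|\leq 2\epsilon$; once this bookkeeping is done, the estimate reduces to two invocations of Lemma~\ref{cl:voltr}.
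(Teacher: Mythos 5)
Your proposal is correct and follows essentially the same route as the paper: Cauchy--Schwarz (the paper uses Jensen) in the $a$-variable, then a second-moment bound on $\vol(V_a(I,2\epsilon))$ obtained by writing it as a double integral of joint hitting probabilities, ordering the two hitting times, applying the strong Markov property at the first one, and invoking Lemma~\ref{cl:voltr} twice with $t=\sqrt{\epsilon}$. The only cosmetic difference is that the paper keeps the radius at $2\epsilon$ by bounding the conditional expectation directly by $\E{\vol(V_a(I,2\epsilon))}$, whereas you enlarge the target ball to $4\epsilon$ to absorb the uncertainty in the landing point $Y_{T(x)}$; both are valid and yield the same order.
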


\begin{proof}[{\bf Proof}]
By Jensen's inequality we get
\begin{align}\label{eq:jensen}
\E{Y^2} \leq \int_{b}^{d}\E{(\vol (V_a(I,2\epsilon)^2} \,da.
\end{align}
We will first show that for all $\delta>0$ and all $a\in \R$
\begin{align}\label{eq:volumesquared2}
\E{(\vol (V_{a}(I,\delta))  )^2} \leq 2\E{\vol (V_{a}(I,\delta))}^2 .
\end{align}

For all $x$ define $\tau_x = \inf\{t\in I: X_t+at \in \B(x,\delta)\}$. We then have
\begin{align}\label{eq:volumesquared}
\E{(\vol (V_{a}(I,\delta))  )^2}  & = \int_{\R} \int_{\R} \pr{\tau_x <\infty, \tau_y <\infty}\, dx \,dy = 
2 \int_{\R}\int_{\R} \pr{\tau_x \leq \tau_y<\infty} \,dx \,dy \nonumber
\\
\nonumber
&= 2\int_{\R} \pr{\tau_x<\infty} \int_{\R}\prcond{\tau_x\leq \tau_y<\infty}{\tau_x <\infty}\, dy\,dx \\
&= 2\int_{\R} \pr{\tau_x<\infty} \econd{\vol (V_{a}(I\cap[\tau_x,1],\delta)) }{\tau_x<\infty} \,dx.
\end{align}
Since $(X_u+au)$ is c{\`a}dl{\`a}g and the filtration~$\til{\F}$ is right continuous, it follows from Claim~\ref{cl:stoppingtime} that $\tau_x$ is a stopping time. 
By the stationarity, the independence of increments and the c{\`a}dl{\`a}g property of $X$, we get that $X$ satisfies the strong Markov property (see~\cite[Proposition~I.6]{Bertoin}).
In other words, on the event $\{\tau_x<\infty\}$, the process $(X(\tau_x+t))_{t\geq 0}$ is c{\`a}dl{\`a}g and has independent and stationary increments.
Thus we deduce
\[
\econd{\vol(V_{a}(I\cap [\tau_x,1],\delta))}{\tau_x<\infty} \leq \E{\vol(V_{a}(I,\delta))},
\]
and this finishes the proof of~\eqref{eq:volumesquared2}

Applying Lemma~\ref{cl:voltr} with $t=\sqrt{\epsilon}$ and $r=\epsilon$ gives that there exists a constant $c$ so that for all $\epsilon$ sufficiently small and for all $a\in [-2,2]$
\[
\E{\vol(V_{a}(I,2\epsilon)} \leq \frac{c\sqrt{\epsilon}}{\log(1/\epsilon)}.
\]
Therefore from~\eqref{eq:jensen} and~\eqref{eq:volumesquared2}, since the above bound is uniform over all $a\in [-2,2]$, we deduce that for all $\epsilon$ sufficiently small
\[
\E{Y^2} \leq \frac{c'\epsilon}{(\log(1/\epsilon))^2}
\]
and this completes the proof.
\end{proof}

\begin{proof}[{\bf Proof of Theorem~\ref{thm:Cauchyproc}}]

It is easy to see that $\cup_{k=0}^{3}e^{ik\pi/4}\Lambda$ is a Kakeya set. 
Indeed, if we fix $t$ and we vary $a$, then we see that $\Lambda$ contains all directions from $0$ up to $45^\circ$ degrees. It then follows that the set $\cup_{k=0}^{3}e^{ik\pi/4} \Lambda$ contains a unit line segment in all directions.

It remains to show that there is a constant $c$ so that almost surely for all $\epsilon$ sufficiently small
\begin{align}\label{eq:goalvola}
\vol(\Lambda(\epsilon)) \leq \frac{c}{\log(1/\epsilon)}.
\end{align}
Note that it suffices to show the above inequality for $\epsilon$ which goes to $0$ along powers of $4$.
%
It is easy to see that for all $\epsilon>0$ we have
\[
\Lambda(\epsilon) \subseteq \bigcup_{-\epsilon\leq a\leq 1+\epsilon} \{a\} \times V_a([0,1],2\epsilon).
\]
Indeed, let $(x,y) \in \Lambda(\epsilon)$. Then $x\in [-\epsilon,1+\epsilon]$ and $(x-b)^2 + (y-(X_t + bt))^2 <\epsilon^2$ for some $b,t \in [0,1]$.
By the triangle inequality and since $t\in [0,1]$, we get
\[
|y-(X_t+xt) | \leq |y-(X_t+bt)| + |(b-x)t| \leq 2\epsilon.
\]
Take $\epsilon = 2^{-2n}$.
Thus in order to show~\eqref{eq:goalvola}, it suffices to prove that almost surely for all $n$ sufficiently large we have
\begin{align}\label{eq:newgoalvol}
\vol\left(\bigcup_{-2^{-2n}\leq a\leq 1+2^{-2n}} \{a\} \times V_a([0,1],2^{-2n+1})\right) \leq \frac{c}{\log(2^{2n})}.
\end{align}
For $j=1,\ldots,2^n$ define $I_j = [(j-1)2^{-n},j2^{-n}]$. Since $V_a([0,1],2^{-2n+1}) = \cup_{i\leq 2^n} V_a(I_i,2\epsilon)$ for all $a$, writing  
\[
Y_i = \vol\left(\bigcup_{-2^{-2n}\leq a\leq 1+2^{-2n}} \{a\} \times V_a(I_i,2^{-2n+1})\right) =  \int_{-2^{-2n}}^{1+2^{-2n}} \vol(V_a(I_i,2^{-2n+1})) \,da
\] 
we have by the subadditivity of the volume that
\[
\vol\left(\bigcup_{-2^{-2n}\leq a\leq 1+2^{-2n}} \{a\} \times V_a([0,1],2^{-2n+1})\right) \leq \sum_{i=1}^{2^n} Y_i.
\]
Hence it suffices to show that almost surely eventually in $n$
\begin{align}\label{eq:lastgoalvoly}
\sum_{i=1}^{2^n} Y_i \leq \frac{c}{\log(2^{2n})}.
\end{align}
Since $X$ has independent and stationary increments, it follows that the random variables $Y_i$ are independent and identically distributed. 
From Lemma~\ref{lem:yestimate} we obtain that $\var(Y_i) \leq c4^{-n}(\log(2^{2n}))^{-2}$ for all $n$ sufficiently large and thus we conclude by independence that eventually in $n$
\[
\var\left(\sum_{i=1}^{2^n} Y_i\right) \leq \frac{c 2^{-n}}{(\log(2^{2n}))^2}.
\]
From Chebyshev's inequality we now get
\[
\pr{\left|\sum_{i=1}^{2^n} Y_i - \E{\sum_{i=1}^{2^n}Y_i } \right| \geq \frac{1}{\log (2^{2n})} } \leq \frac{c}{2^{n}},
\]
which is summable and hence Borel Cantelli now gives that almost surely for all $n$ large enough
\[
\sum_{i=1}^{2^n} Y_ i \leq \E{\sum_{i=1}^{2^n} Y_i} + \frac{1}{\log(2^{2n})}.
\]
Using Lemma~\ref{cl:voltr} gives that $\E{Y_i} \leq c 2^{-n} (\log(2^{2n}))^{-1}$, and hence this  together with the inequality above finishes the proof.
\end{proof}

We now show that our construction is optimal in terms of boundary size up to a constant factor.

\begin{proposition}\label{pro:besicvol}
Let $K$ be a Kakeya set, i.e.\ a set that contains a unit line segment in all directions. Then for all $\epsilon$ sufficiently small
\[
\vol(K(\epsilon)) \geq \frac{1}{2\log(1/\epsilon)}.
\]
\end{proposition}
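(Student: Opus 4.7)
\emph{Proof proposal.} The plan is the classical Cauchy--Schwarz / bush argument, adapted to keep the constant sharp. Fix a small $\epsilon>0$ and an integer $N$ to be chosen later. Since $K$ contains a unit segment in every direction, for each $i=1,\ldots,N$ there is a unit segment $L_i\subset K$ of direction $\theta_i:=\pi(i-1)/N$. Let $T_i:=L_i(\epsilon)$ be the $\epsilon$-neighbourhood of $L_i$; each $T_i$ is contained in $K(\epsilon)$, and is a ``stadium'' of area $|T_i|=2\epsilon+\pi\epsilon^2\geq 2\epsilon$. The idea is to relate $\vol(K(\epsilon))$ to the $|T_i|$ via the standard inequality
\[
\Bigl(\sum_{i=1}^{N}|T_i|\Bigr)^{2}=\Bigl(\int \sum_i \1_{T_i}\Bigr)^{2}\leq \vol\Bigl(\bigcup_i T_i\Bigr)\cdot \int\Bigl(\sum_i \1_{T_i}\Bigr)^{2}=\vol(K(\epsilon))\cdot\sum_{i,j}|T_i\cap T_j|,
\]
so the task reduces to upper bounding the pairwise overlaps.

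For $i\neq j$, the tubes $T_i,T_j$ lie inside two infinite strips of width $2\epsilon$ whose line directions differ by the undirected angle $\alpha_{ij}=\min(\pi|i-j|/N,\pi-\pi|i-j|/N)\in(0,\pi/2]$. A direct parallelogram computation shows that the intersection of two such strips is a rhombus of area $4\epsilon^{2}/\sin\alpha_{ij}$, hence $|T_i\cap T_j|\leq 4\epsilon^{2}/\sin\alpha_{ij}$. Using $\sin x\geq 2x/\pi$ on $[0,\pi/2]$ and symmetry in $k=|i-j|$,
\[
\sum_{j\neq i}\frac{1}{\sin\alpha_{ij}}\leq 2\sum_{k=1}^{\lfloor N/2\rfloor}\frac{N}{2k}\leq N\log(eN),
\]
so that $\sum_{i\neq j}|T_i\cap T_j|\leq 4N^{2}\epsilon^{2}\log(eN)$. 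Combined with the diagonal contribution $\sum_i|T_i|\leq N(2\epsilon+\pi\epsilon^{2})$, this gives
\[
\vol(K(\epsilon))\geq \frac{(2N\epsilon)^{2}}{N(2\epsilon+\pi\epsilon^{2})+4N^{2}\epsilon^{2}\log(eN)}.
\]

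Now I choose $N=\lceil 1/\epsilon\rceil$, so that $N\epsilon\to\infty$ while $\log(eN)=\log(1/\epsilon)(1+o(1))$. The diagonal term $N(2\epsilon+\pi\epsilon^2)$ becomes negligible compared with $4N^{2}\epsilon^{2}\log(eN)$, and the bound simplifies to $\vol(K(\epsilon))\geq (1+o(1))/\log(1/\epsilon)$, which beats $1/(2\log(1/\epsilon))$ for all sufficiently small $\epsilon$.

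The main substantive step is the Cauchy--Schwarz / duplication trick, which is completely standard. The only thing requiring care is the constant: the direct estimate yields an asymptotic $1/\log(1/\epsilon)$, so the proposition with constant $1/2$ leaves ample slack, and routine bookkeeping (absorbing the $o(1)$ error and the mild loss from $\sin x\geq 2x/\pi$) suffices. No novel ingredient beyond the strip-intersection area $4\epsilon^{2}/\sin\alpha$ is needed; the construction in Theorem~\ref{thm:Cauchyproc} shows the bound is tight up to the multiplicative constant.
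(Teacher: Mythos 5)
Your proof is correct and is essentially the paper's own argument: take unit segments in $\sim 1/\epsilon$ equally spaced directions, apply Cauchy--Schwarz to the sum of the indicators of their $\epsilon$-tubes, and bound the pairwise overlaps by the strip-intersection area $4\epsilon^2/\sin\alpha$ together with $\sin x\geq 2x/\pi$ and the harmonic series. The only (inconsequential) slip is the claim that $N\epsilon\to\infty$ for $N=\lceil 1/\epsilon\rceil$ --- in fact $N\epsilon\to 1$ --- but the diagonal term is then $O(1)$, still negligible against $4N^2\epsilon^2\log(eN)\asymp\log(1/\epsilon)$, so the conclusion stands.
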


\begin{proof}[{\bf Proof}]

Let $\epsilon>0$ and $n=\lfloor \epsilon^{-1}\rfloor$. Suppose $x_i,v_i \in \R^2$ for all $i=1,\ldots,n$ are such that the unit line segments
$\ell_i = \{ x_i + tv_i : t\in [0,1]\}$ for $i=1,\ldots,n$ are contained in the set $K$ and satisfy 
\[
\sphericalangle (\ell_{i-1},\ell_i) = \frac{\pi}{n} \text{ for all } i=1,\ldots,n \ \text{ and }  \ \sphericalangle (\ell_{1},\ell_n) = \frac{\pi}{n}.
\]
For each $i$ take $w_i \perp v_i$ and define the set 
\[
\til{\ell}_i(\epsilon) = \{ x_i + tv_i +s w_i: t\in [0,1], s\in [-\epsilon,\epsilon]\}
\]
as in Figure~\ref{fig:line}. 
\begin{figure}
\begin{center}
\epsfig{file=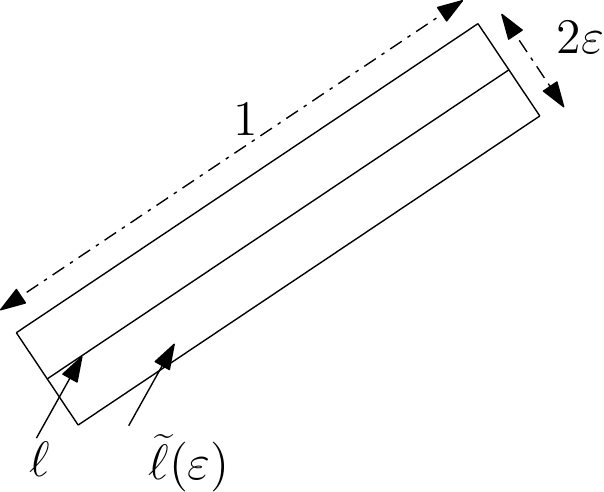, width=7 cm}
\caption{\label{fig:line} Line $\ell$ and rectangle $\til{\ell}(\epsilon)$}
\end{center}
\end{figure}
Then it is clear that
\begin{align}\label{eq:inclusion}
\til{\ell}_i(\epsilon) \subseteq K(\epsilon) \ \text{ for all } i.
\end{align}
For all $i=1,\ldots,n$ we define a function $\Psi_i:\R^2 \to \{0,1\}$ via $\Psi_i(x) = \1(x \in \til{\ell}_i(\epsilon))$ and let $\Psi = \sum_{i=1}^{n} \Psi_i$. Then from~\eqref{eq:inclusion} we obtain $\{x: \Psi(x)>0 \} \subseteq K(\epsilon)$,
and hence it suffices to show that for all $\epsilon$ sufficiently small
\begin{align}\label{eq:psigoal}
\vol(\{\Psi >0\}) \geq \frac{1}{2\log(1/\epsilon)}.
\end{align}
By Cauchy-Schwarz we get
\begin{align}\label{eq:cauchysch}
\vol(\{\Psi>0\}) \geq \frac{\left( \int_{\R^2} \Psi(x) \, dx    \right)^2}{\int_{\R^2}\Psi^2(x) \,dx }.
\end{align}
By the definition of the function $\Psi$ we immediately get that
\begin{align}\label{eq:firstmomentpsi}
\int_{\R^2} \Psi(x) \,dx = \sum_{i=1}^{n} \int_{\R^2} \1(x\in \til{\ell}_i(\epsilon))\,dx = 2\epsilon n.
\end{align}
Since $\Psi_i^2 = \Psi_i$ for all $i$, we have
\[
\int_{\R^2}\Psi^2(x) \,dx = \int_{\R^2} \Psi(x) \,dx + 2 \sum_{i=1}^{n}\sum_{k=1}^{n-i} \vol(\til{\ell}_i(\epsilon) \cap \til{\ell}_{i+k}(\epsilon)).
\]

\begin{figure}
\begin{center}
\epsfig{file=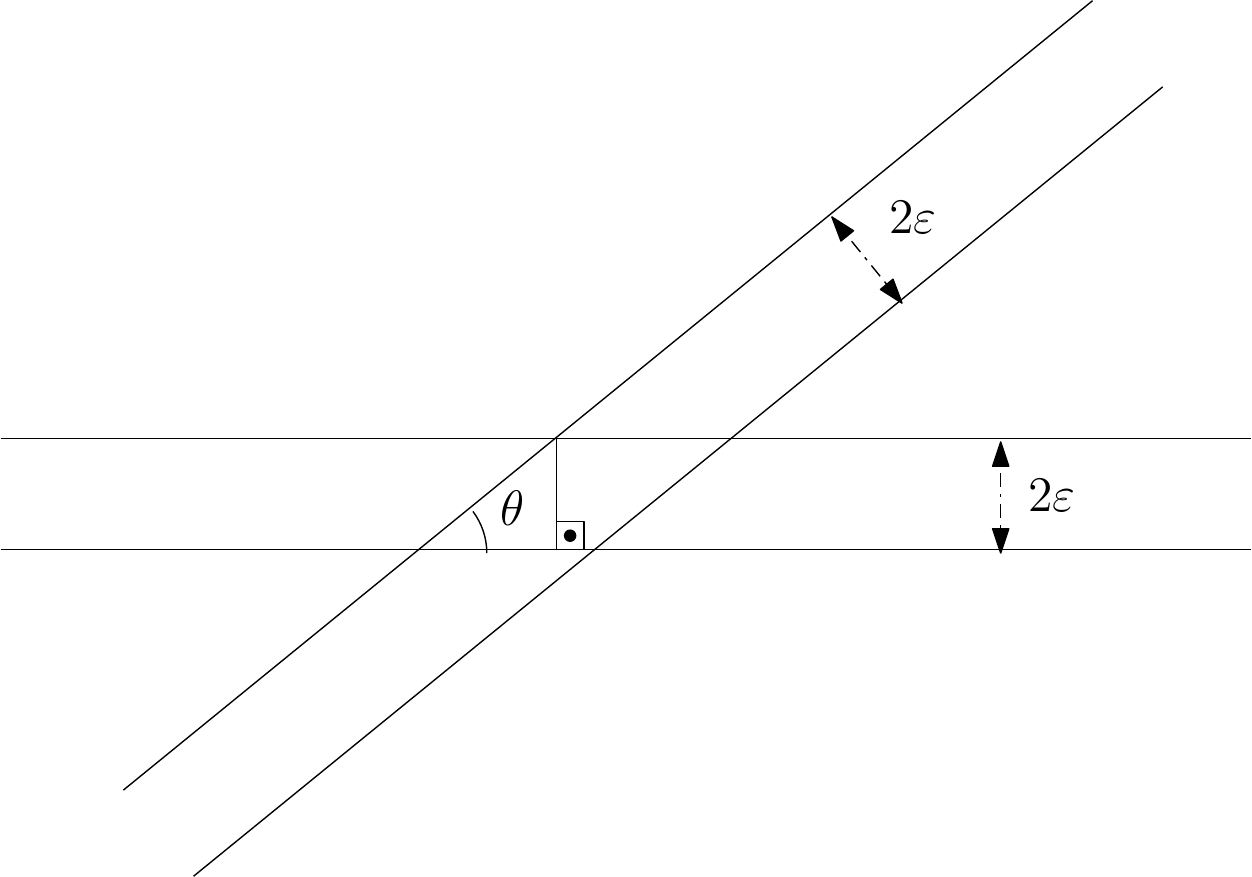, width=8cm}
\caption{\label{fig:intersection} Intersection of two infinite strips}
\end{center}
\end{figure}

The angle between the lines $\ell_i$ and $\ell_{i+k}$ is $k\pi/n$. From Figure~\ref{fig:intersection} we see that if $k\pi/n \leq \pi/2$, then 
\[
\vol(\til{\ell}_i(\epsilon) \cap \til{\ell}_{i+k}(\epsilon)) \leq \frac{4\epsilon^2}{\sin(k\pi/n)} \leq \frac{2\epsilon^2 n}{k},
\]
while if $k\pi/n >\pi/2$, then 
\[
\vol(\til{\ell}_i(\epsilon) \cap \til{\ell}_{i+k}(\epsilon)) \leq 
\frac{4\epsilon^2}{\sin(\pi-k\pi/n)} \leq \frac{2\epsilon^2 n}{n-k}.
\]
Hence using these two inequalities we deduce that
\begin{align*}
\sum_{i=1}^{n}\sum_{k=1}^{n-1} \vol(\til{\ell}_i(\epsilon) \cap \til{\ell}_{i+k}(\epsilon)) &\leq \sum_{i=1}^{\lfloor n/2 \rfloor} \left(\sum_{k=1}^{\lfloor n/2 \rfloor} \frac{2\epsilon^2 n}{k} + \sum_{k=\lfloor n/2\rfloor +1}^{n-i} \frac{2\epsilon^2n}{n-k}  \right)+\sum_{i=\lfloor n/2 \rfloor +1}^{n} \sum_{k=1}^{n-i} \frac{2\epsilon^2 n}{k} \\
& \leq 3\epsilon^2 n^2 \log n + 3\epsilon^2 n^2 .
\end{align*}
Thus putting all these bounds in~\eqref{eq:cauchysch} we obtain
\[
\vol(\{\Psi >0\}) \geq \left( \frac{3}{2}\log n + \frac{3}{2} + \frac{1}{2\epsilon n}    \right)^{-1}.
\]
Since $n= \lfloor \epsilon^{-1}\rfloor$, we conclude that for $\epsilon$ sufficiently small
\[
\vol(\{\Psi >0 \}) \geq \frac{1}{2\log(1/\epsilon)}
\]
and this finishes the proof.
\end{proof}

\section*{Acknowledgement}

We thank Abraham Neyman for useful discussions. 
\newline
Yakov Babichenko's work is supported in part by ERC grant 0307950, and by ISF grant 0397679.
Ron Peretz's work is supported in part by grant \#212/09 of the Israel Science Foundation and by the Google Inter-university center for Electronic Markets and Auctions.
Peter Winkler's work was supported by Microsoft Research, by a Simons Professorship at MSRI, and by NSF grant DMS-0901475.   We also thank MSRI, Berkeley, where part of this work was completed.

\bibliographystyle{plain}
\bibliography{biblio}

\end{document}